\documentclass[12pt]{amsart}
\usepackage{amsfonts}
\usepackage{amsmath}
\usepackage{amssymb}
\usepackage[hmargin=1in, vmargin=1in]{geometry}

\newtheorem{theorem}{Theorem}[section]

\newtheorem{lemma}[theorem]{Lemma}

\theoremstyle{definition}
\newtheorem{definition}[theorem]{Definition}
\newtheorem{remark}[theorem]{Remark}
\newtheorem{example}[theorem]{Example}
\newtheorem{claim}{Claim}

\numberwithin{equation}{section}

\begin{document}
\title[Optimal Tauberian constant in Ingham's theorem]{Optimal Tauberian constant in Ingham's theorem for Laplace transforms}

\author[G. Debruyne]{Gregory Debruyne}
\thanks{G. Debruyne gratefully acknowledges support by Ghent University, through a BOF Ph.D. grant.}
\address{Department of Mathematics\\ Ghent University\\ Krijgslaan 281\\ 9000 Gent\\ Belgium}
\email{gregory.debruyne@UGent.be}

\author[J. Vindas]{Jasson Vindas}
\thanks{The work of J. Vindas was supported by the Research Foundation--Flanders, through the FWO-grant number 1520515N}
\address{Department of Mathematics\\ Ghent University\\ Krijgslaan 281\\ 9000 Gent\\ Belgium}
\email{jasson.vindas@UGent.be}
\subjclass[2010]{11M45, 40E20, 44A10}
\keywords{Ingham Tauberian theorem; Fatou-Riesz theorem; Tauberian constants;  Laplace transform}

\begin{abstract}
It is well known that there is an absolute constant $\mathfrak{C}>0$ such that if the Laplace transform $G(s)=\int_{0}^{\infty}\rho(x)e^{-s x}\:\mathrm{d}x$ of a bounded function $\rho$ has analytic continuation through every point of the segment $(-i\lambda ,i\lambda )$ of the imaginary axis, then 
$$
\limsup_{x\to\infty} \left|\int_{0}^{x}\rho(u)\:\mathrm{d}u - G(0)\right|\leq \frac{ \mathfrak{C}}{\lambda} \: \limsup_{x\to\infty} |\rho(x)|.
$$
The best known value of the constant $\mathfrak{C}$ was so far $\mathfrak{C}=2$. In this article we show that the inequality holds with $\mathfrak{C}=\pi/2$ and that this value is best possible. We also sharpen Tauberian constants in finite forms of other related complex Tauberian theorems for Laplace transforms.
\end{abstract}

\maketitle

\section{Introduction}

The aim of this article is to generalize and improve the following Tauberian theorem:
\begin{theorem}
\label{finiteformF-Rth1}
 Let $\rho\in L^{\infty}[0,\infty)$. Suppose that there is a constant $\lambda>0$ such that its Laplace transform 
\[
\label{F-Rthintroeq1} G(s)=\mathcal{L}\{\rho;s\}= \int_{0}^{\infty} \rho(x)e^{-s x}\:\mathrm{d}x 
\]
has analytic continuation through every point of the segment $(-i\lambda,i\lambda)$ of the imaginary axis and set $b=G(0)$. Then, there is an absolute constant $\mathfrak{C}>0$ such that  
\begin{equation}
\label{F-Rthintroeq1} \limsup_{x\to\infty}\left|\int_{0}^{x}\rho(u)\mathrm{d}u-b\right|\leq \frac{ \mathfrak{C}}{\lambda} \: \limsup_{x\to\infty} |\rho(x)| .
\end{equation}
\end{theorem}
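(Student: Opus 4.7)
The plan is to apply the Newman--Korevaar contour integration technique. Introduce the truncated Laplace transform $G_{x}(s) := \int_{0}^{x}\rho(u)e^{-su}\,du$, which is entire in $s$, so that $G_{x}(0) = \int_{0}^{x}\rho(u)\,du$ is exactly the quantity to be compared with $b = G(0)$. Using the analytic continuation hypothesis and a compactness argument on the open segment $(-i\lambda,i\lambda)$, I would fix a curve $\gamma$ in the left half-plane joining $i\lambda$ to $-i\lambda$ and lying in a neighborhood of $(-i\lambda,i\lambda)$ where $G$ is holomorphic. Together with the right semicircle $C^{+} = \{|s|=\lambda,\ \mathrm{Re}(s)\geq 0\}$, this forms a closed contour enclosing the origin within the domain of holomorphy of $G$. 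The auxiliary kernel would be $K(s) = 1 + s^{2}/\lambda^{2}$, which is entire with $K(0) = 1$ and $K(\pm i\lambda) = 0$.

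By Cauchy's theorem applied separately to $G$ (on $C^{+}\cup\gamma$) and to the entire function $G_{x}$ (on $C^{+}\cup C^{-}$, where $C^{-}$ is the left semicircle of $|s|=\lambda$), subtracting yields the representation
\[
G_{x}(0)-G(0)=\frac{1}{2\pi i}\int_{C^{+}}(G_{x}-G)\frac{K(s)e^{sx}}{s}\,ds+\frac{1}{2\pi i}\int_{C^{-}}G_{x}\frac{K(s)e^{sx}}{s}\,ds-\frac{1}{2\pi i}\int_{\gamma}G\frac{K(s)e^{sx}}{s}\,ds.
\]
The third integral tends to $0$ as $x\to\infty$ by dominated convergence, since $|e^{sx}|$ decays uniformly on the compact curve $\gamma$. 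On $|s|=\lambda$ a direct computation shows $K(s)/s = 2\mathrm{Re}(s)/\lambda^{2}$. Combined with the pointwise estimate $|G(s)-G_{x}(s)| \leq \|\rho\|_{\infty}e^{-x\mathrm{Re}(s)}/\mathrm{Re}(s)$ on $C^{+}$ (and the symmetric bound $|G_{x}(s)| \leq \|\rho\|_{\infty}e^{-x\mathrm{Re}(s)}/|\mathrm{Re}(s)|$ on $C^{-}$), the exponential factors cancel and each of the first two integrals is bounded by $\|\rho\|_{\infty}/\lambda$.

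The estimates combine to give $\limsup_{x\to\infty}|G_{x}(0)-G(0)|\leq 2\|\rho\|_{\infty}/\lambda$, proving the theorem with $\mathfrak{C}=2$. The main obstacle I anticipate is refining $\|\rho\|_{\infty}$ to $M := \limsup_{u\to\infty}|\rho(u)|$ in the final bound. This is routine on $C^{+}$, where $G-G_{x}$ involves only values of $\rho$ beyond $x$. On $C^{-}$ one splits the integral defining $G_{x}$ at a threshold $T_{0}$ past which $|\rho(u)| \leq M + \varepsilon$; the contribution from $[0,T_{0}]$ is bounded by a constant times $e^{-x\mathrm{Re}(s)}$, producing an $o(1)$ term (uniformly in $s\in C^{-}$) after multiplication by $e^{sx}$ and passage $x\to\infty$, while the tail contribution from $[T_{0},x]$ supplies $M+\varepsilon$ in place of $\|\rho\|_{\infty}$. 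Letting $\varepsilon\to 0$ then completes the proof.
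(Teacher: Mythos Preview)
Your argument is the Newman--Korevaar contour integration method, and it correctly establishes the theorem with $\mathfrak{C}=2$. One minor technicality: since the hypothesis only gives analytic continuation through the \emph{open} segment $(-i\lambda,i\lambda)$, $G$ need not extend across the endpoints $\pm i\lambda$, so your curve $\gamma$ cannot in general be taken to reach them; the standard fix is to run the argument with radius $R<\lambda$ and let $R\to\lambda^{-}$ at the end. Also, your phrase ``$|e^{sx}|$ decays uniformly on $\gamma$'' is not literally true at the endpoints (where $\Re e\,s=0$); what you actually use is pointwise decay plus the uniform bound $|e^{sx}|\leq 1$ and the vanishing $K(\pm i\lambda)=0$, which is enough for dominated convergence.

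This is a completely different route from the paper's. The paper does not argue by contour integration at all; it recasts the problem as a Lipschitz estimate (Theorem~\ref{finiteformF-Rth3}), establishes the convolution relation $\lim_{h\to\infty}\int_{-\infty}^{\infty}\tau(x+h)K(x)\,\mathrm{d}x=0$ for the extremal bandlimited kernel $K(x)=2\cos x/(\pi^{2}-4x^{2})$, and then compares $\tau$ against an explicit sawtooth extremal function $\alpha$ through a sequence of monotonicity and zig-zag lemmas. That machinery yields the \emph{optimal} constant $\mathfrak{C}=\pi/2$ and works under the weaker hypothesis of local pseudofunction boundary behavior, whereas your approach is short, self-contained, and transparent but appears structurally limited to $\mathfrak{C}=2$ (the kernel $(1+s^{2}/\lambda^{2})/s$ cannot squeeze out more). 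For Theorem~\ref{finiteformF-Rth1} as stated---mere existence of an absolute $\mathfrak{C}$---your proof is entirely sufficient; the paper's real-variable extremal analysis is what is needed for sharpness.
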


Theorem \ref{finiteformF-Rth1} is a Laplace transform version of the Fatou-Riesz theorem \cite[Chap. III]{korevaarbook} and is due to Ingham \cite{ingham1935}, who established the inequality (\ref{F-Rthintroeq1}) with $\mathfrak{C}=6$. The absolute constant was improved to $\mathfrak{C}=2$ by Korevaar \cite{korevaar1982} and Zagier \cite{zagier1997} via Newman's contour integration method. Vector-valued variants of Theorem \ref{finiteformF-Rth1} have many important applications in operator theory, particularly in semigroup theory; see, for example, Arendt and Batty \cite{a-b}, Chill \cite{Chill1998}, and the book \cite{a-b-h-n}. We also refer to the recent works \cite{b-b-t2016,Chill2016} for remainder versions of Ingham's theorem. 

Recently \cite{Debruyne-VindasComplexTauberians}, the authors have weakened the assumption of analytic continuation on the Laplace transform in Theorem \ref{finiteformF-Rth1} to so-called local pseudofunction boundary behavior (see Section \ref{section preli} for the definition of this notion) of the analytic function (\ref{F-Rthintroeq2}), which includes as a particular instance $L^{1}_{loc}$-extension as well. The proof method given there could actually yield better values for $\mathfrak{C}$ than $2$, although sharpness could not be reached via that technique. 

Our goal here is to find the optimal value for $\mathfrak{C}$. So, the central part of this article is devoted to showing the ensuing sharp version of Theorem \ref{finiteformF-Rth1}. 

\begin{theorem}\label{finiteformF-Rth2} 
Let $\rho\in L^{\infty}[0,\infty)$. Suppose that there is a constant $b$ such that 
\begin{equation}
\label{F-Rthintroeq2} \frac{\mathcal{L}\{\rho;s\}-b}{s}
\end{equation}
admits local pseudofunction boundary behavior on the segment $(-i\lambda,i\lambda)$ of the imaginary axis. Then, the inequality \eqref{F-Rthintroeq1} holds with $\mathfrak{C}=\pi/2$. Moreover, the constant $\pi/2$ cannot be improved.
\end{theorem}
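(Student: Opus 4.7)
Proof plan:

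I would prove Theorem \ref{finiteformF-Rth2} by reducing the upper bound to a classical sharp inequality of Bohr for bounded functions with a spectral gap, and separately by exhibiting a saturating example.

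Set $R(x) = \int_{0}^{x}\rho(u)\,\mathrm{d}u - b$ for $x \geq 0$ and extend $R$ by zero to $x < 0$, so that $\mathcal{L}\{R;s\}$ coincides with \eqref{F-Rthintroeq2}. Write $M = \limsup_{x\to\infty}|\rho(x)|$ and $A = \limsup_{x\to\infty}|R(x)|$. I would first invoke the already-established inequality with $\mathfrak{C}=2$ to ensure $A<\infty$, so that $R$ is bounded on $[0,\infty)$. Unwinding the definition of local pseudofunction boundary behavior through Parseval produces the convolution statement that $(R*\psi)(T)\to 0$ as $T\to\infty$ for every Schwartz $\psi$ whose Fourier transform is supported in $(-\lambda,\lambda)$.

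Choose $T_n\to\infty$ with $|R(T_n)|\to A$ and pass to a weak-$*$ $L^\infty_{\mathrm{loc}}(\mathbb{R})$ subsequential limit $\tilde R$ of the translates $R(T_n+\cdot)$, along with a matching limit $\tilde\rho$ of $\rho(T_n+\cdot)$. Then $\tilde R\in L^\infty(\mathbb{R})$ with $\|\tilde R\|_\infty\leq A$, and $\tilde R'=\tilde\rho$ a.e. with $\|\tilde\rho\|_\infty\leq M$; moreover, since convolution with an $L^1$ kernel is continuous for weak-$*$ convergence, $(\tilde R*\psi)\equiv 0$ for every $\psi$ as above. Equivalently, the distributional Fourier transform of $\tilde R$ vanishes on $(-\lambda,\lambda)$, so $\tilde R$ has a spectral gap of width $2\lambda$. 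Bohr's classical sharp inequality asserts that any bounded function $f$ with bounded derivative and such a spectral gap satisfies $\|f\|_\infty\leq(\pi/(2\lambda))\|f'\|_\infty$; applied to $\tilde R$, after arranging by a further mollification or choice of subsequence that $|\tilde R(0)|=A$, this yields $A\leq \pi M/(2\lambda)$, which is \eqref{F-Rthintroeq1} with $\mathfrak{C}=\pi/2$.

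For sharpness I would test the square wave $\rho(x)=\operatorname{sgn}(\sin\lambda x)$. A direct computation gives $G(s)=s^{-1}\tanh(\pi s/(2\lambda))$, which is meromorphic on $\mathbb{C}$ with poles only at $\pm i(2k+1)\lambda$; in particular $G$ is analytic on a neighborhood of the open segment $(-i\lambda,i\lambda)$, $b=G(0)=\pi/(2\lambda)$, and $(G(s)-b)/s$ trivially has local pseudofunction boundary behavior there. Its antiderivative $\int_0^x\operatorname{sgn}(\sin\lambda u)\,\mathrm{d}u$ is a periodic triangular wave ranging over $[0,\pi/\lambda]$, so $R$ oscillates between $-\pi/(2\lambda)$ and $\pi/(2\lambda)$, yielding $\limsup|R|/\limsup|\rho|=\pi/(2\lambda)$. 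The principal obstacle is applying Bohr's inequality in the generality needed here (bounded, not necessarily almost periodic) and arranging that the weak-$*$ limit $\tilde R$ actually attains $A$ at a specific point; the latter likely requires an additional mollification step or a careful selection of $T_n$ that preserves both the spectral-gap condition and the derivative bound under the extraction.
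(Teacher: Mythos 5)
Your proposal is essentially correct, but it takes a genuinely different route from the paper. The paper never passes to limit functions: after reducing to a Lipschitz primitive $\tau$, it derives the convolution relation $\int\tau(x+h)K(x)\,\mathrm{d}x\to0$ for the extremal kernel $K(x)=2\cos x/(\pi^{2}-4x^{2})$ and then runs a long, self-contained comparison argument (zig-zag majorants, the extremal triangular wave $\alpha$, induction over intervals of length $\pi/2$) to conclude $\limsup|\tau|\leq\pi/2$; in effect it re-proves from scratch an inequality of Bohr--Favard strength. You instead extract a translation limit $\tilde R$ of $R(T_n+\cdot)$, note that it inherits the spectral gap $(-\lambda,\lambda)$ from the pseudofunction hypothesis, and quote the sharp Bohr inequality $\|f\|_{\infty}\leq(\pi/(2\lambda))\|f'\|_{\infty}$. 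Three remarks. First, boundedness of $R$ cannot be taken from the classical $\mathfrak{C}=2$ results of Korevaar and Zagier, which assume analytic continuation; under the pseudofunction hypothesis you need either the authors' earlier theorem in \cite{Debruyne-VindasComplexTauberians} or the short direct argument of Subsection \ref{subsection F-Rproof} (convolve with a nonnegative band-limited kernel of integral one). Second, your worry about $\tilde R$ attaining the value $A$ at $0$ evaporates if you replace the weak-$*$ extraction by Arzel\`a--Ascoli: the translates are uniformly bounded and eventually Lipschitz with constant $M+\varepsilon$, so a subsequence converges locally uniformly, the value at $0$ and the Lipschitz bound pass to the limit, and $\tilde R*\psi\equiv0$ follows by dominated convergence. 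Third, the entire weight of the sharp constant now rests on Bohr's inequality for bounded, not necessarily almost periodic, functions with a spectral gap; this is a genuine classical theorem (H\"ormander's generalization of Bohr's inequality, and Levin's work), so your argument is complete once it is cited, but it is precisely a statement of the strength whose self-contained proof occupies Section \ref{section proof of F-R theorem} of the paper --- what your route gains in brevity it pays for in reliance on that external result. Your sharpness example is correct and coincides, up to scaling, with the paper's extremal triangular-wave example in Subsection \ref{F-R subsection example}.
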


That $\mathfrak{C}=\pi/2$ is optimal in Theorem \ref{finiteformF-Rth2} will be proved in Subsection \ref{F-R subsection example} by finding an extremal example of a function for which the inequality (\ref{F-Rthintroeq1}) becomes equality. The Laplace transform of this example has actually analytic extension to the given imaginary segment, showing so the sharpness of $\pi/2$ under the stronger hypothesis of Theorem \ref{finiteformF-Rth1} as well. Our proof of Theorem \ref{finiteformF-Rth2} is considerably more involved than earlier treatments of the problem. It will be given in Section \ref{section proof of F-R theorem} and is based on studying the interaction of our extremal example with a certain extremal convolution kernel. 
 
The rest of the article derives several important consequences from Theorem \ref{finiteformF-Rth2}. We shall use Theorem \ref{finiteformF-Rth2} to sharpen Tauberian constants in finite forms of other complex Tauberian theorems. Section \ref{section generalizations and corollaries F-R} deals with generalizations and corollaries of Theorem \ref{finiteformF-Rth2} under two-sided Tauberian conditions, while we study corresponding problems with one-sided Tauberian hypotheses in Section \ref{section one-sided F-R}.  Our results can be regarded as general inequalities for functions whose Laplace transforms have local pseudofunction boundary behavior on a given symmetric segment of the imaginary axis in terms of their oscillation or decrease moduli at infinity. In particular, we shall show the ensuing one-sided version of Theorem \ref{finiteformF-Rth2} in Section \ref{section one-sided F-R}.

\begin{theorem}\label{finiteformF-R1sideth2} 
Let $\rho\in L^{1}_{loc}[0,\infty)$ be such that its Laplace transform is convergent on the half-plane $\Re e\:s>0$. If there is a constant $b$ such that \eqref{F-Rthintroeq2} has local pseudofunction boundary behavior on $(-i\lambda,i\lambda)$, then 
\begin{equation}\label{F-Rintroeq3}
\limsup_{x\to\infty}\left| \int_{0}^{x}\rho(u)\mathrm{d}u-b\right|\leq \frac{\mathfrak{K}}{\lambda} \max\left\{-\liminf_{x\to\infty}\rho(x),0\right\}
\end{equation}
with $\mathfrak{K}=\pi$. The constant $\pi$ here is best possible.
\end{theorem}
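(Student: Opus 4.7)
My plan is to prove both the upper bound and the sharpness of $\mathfrak{K} = \pi$. I would treat sharpness first, since an extremal family both motivates the factor of two relative to Theorem \ref{finiteformF-Rth2} and suggests the structure of the proof.

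For sharpness, I would construct an extremal family as follows. Set $P = 2\pi/\lambda$ and for $\delta \in (0, P)$ define $\rho_\delta$ to be the $P$-periodic function equal to $(P-\delta)/\delta$ on $[0, \delta)$ and to $-1$ on $[\delta, P)$. A direct computation shows that $\rho_\delta$ has mean zero over one period, hence its Laplace transform $G$ is meromorphic with singularities only at $ik\lambda$ for $k \in \mathbb{Z}\setminus\{0\}$. Consequently $(G(s)-b)/s$ (with $b = G(0)$) extends analytically across the open segment $(-i\lambda, i\lambda)$, yielding an even stronger condition than local pseudofunction boundary behavior. Computing explicitly, $b = (P-\delta)/2$, and $R_\delta(X) = \int_0^X \rho_\delta - b$ is periodic with $\max|R_\delta| = (P-\delta)/2$. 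Since $-\liminf_{x\to\infty} \rho_\delta(x) = 1$, the ratio $\lambda\limsup|R_\delta|/(-\liminf\rho_\delta) = \pi - \lambda\delta/2$ approaches $\pi$ as $\delta \to 0^+$, proving $\mathfrak{K} \geq \pi$.

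For the upper bound, set $M = \max\{-\liminf_{x\to\infty}\rho(x), 0\}$ and fix $\epsilon > 0$, so that $\rho(x) \geq -M - \epsilon$ for $x \geq x_\epsilon$. The argument parallels the proof of Theorem \ref{finiteformF-Rth2} in Section \ref{section proof of F-R theorem}, where one derives a representation of $R(X) = \int_0^X\rho-b$ as an integral of $\rho$ against an extremal convolution kernel $\Phi_\lambda$ with $\|\Phi_\lambda\|_1 = \pi/(2\lambda)$, yielding $|R(X)| \leq \|\rho\|_\infty \|\Phi_\lambda\|_1 + o(1)$. Here $\rho$ need not be bounded above, so this direct estimate fails. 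Instead, I would construct two nonnegative kernels $\Phi_\lambda^+, \Phi_\lambda^- \geq 0$ yielding separate upper and lower estimates, of the form $R(X) \leq (M+\epsilon)\|\Phi_\lambda^+\|_1 + o(1)$ and $-R(X) \leq (M+\epsilon)\|\Phi_\lambda^-\|_1 + o(1)$, by applying the integral representation to the function $\rho + M + \epsilon$, which is nonnegative eventually by the one-sided Tauberian hypothesis. The optimal common value $\|\Phi_\lambda^\pm\|_1 = \pi/\lambda$ matches the extremal family above, producing the factor $\pi$ in place of $\pi/2$; letting $\epsilon \to 0^+$ then concludes the proof.

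The main obstacle is the precise construction and optimality of the one-sided kernels $\Phi_\lambda^\pm$. This is an extremal problem of Beurling--Selberg type: among nonnegative functions satisfying certain Fourier-analytic constraints derived from the pseudofunction boundary behavior on $(-i\lambda, i\lambda)$, find ones of minimal $L^1$-norm that reproduce the correct contour-integral representation of $R(X)$. The minimal norm must then be shown to equal $\pi/\lambda$, verified by duality with the extremal family $\rho_\delta$. A secondary technical point is that $\rho$ is only locally integrable rather than bounded, so the representation requires careful handling, most likely via truncation of $\rho$ at large values followed by passing to the limit.
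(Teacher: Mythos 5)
Your sharpness construction is fine: the mean-zero periodic spike family $\rho_\delta$ has meromorphic Laplace transform whose pole at $s=0$ is removable and whose other poles sit at $\pm i\lambda$ and beyond, so $(\mathcal{L}\{\rho_\delta;s\}-b)/s$ is analytic on $(-i\lambda,i\lambda)$, and your computation of the ratio tending to $\pi$ is correct. This is essentially the paper's Example for this theorem (a smoothed version of the derivative of the extremal sawtooth), and your variant even avoids the mollification step, since a spike of width $\delta$ serves as well as an approximate delta.

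The upper bound, however, has a genuine gap, and it is located exactly where you place "the main obstacle": the one-sided kernels $\Phi_\lambda^{\pm}\geq 0$ with $\|\Phi_\lambda^{\pm}\|_{1}=\pi/\lambda$ are neither constructed nor shown to exist, and the premise behind them misreads how the two-sided constant is obtained. The proof of Theorem \ref{finiteformF-Rth2} is not a norm estimate $|R(X)|\leq\|\rho\|_\infty\|\Phi_\lambda\|_1+o(1)$ against a kernel of $L^1$-norm $\pi/(2\lambda)$; simple kernel bounds of that type (see the footnote in Subsection \ref{subsection F-Rproof}) only give constants like $(12\log 2)/\pi\approx 2.65$, and the sharp $\pi/2$ comes from the delicate comparison of $\tau$ with the extremal sawtooth $\alpha$ via Lemmas \ref{lemmonotone}--\ref{lemintegralfar}. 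Correspondingly, for the one-sided problem a direct convolution argument with a nonnegative band-limited kernel is precisely what the paper's closing Remark carries out with the Fej\'er kernel, and it yields only $4.1M/\lambda$; the paper states explicitly that the optimal $\pi$ "seems to be out of reach" by that method. There is also an unaddressed structural issue in your reduction: replacing $\rho$ by $\rho+M+\epsilon$ adds a term $(M+\epsilon)/s^{2}$ to \eqref{F-Rthintroeq2}, which destroys the local pseudofunction hypothesis at $s=0$ and turns the problem into one of Wiener--Ikehara type. That is in fact how the paper proceeds: it sets $\tau(x)=\int_0^x\rho-b$, notes $\tau(x)+Mx$ is eventually non-decreasing, applies Ingham's device of weighting by $e^{\theta x}$ to form the non-decreasing $S(x)=\int_{0^-}^{x}e^{\theta u}(\mathrm{d}\tau(u)+M\mathrm{d}u)$, invokes the Graham--Vaaler sharp Wiener--Ikehara theorem (where the Beurling--Selberg-type extremal problem you allude to is solved, but for majorants/minorants of exponentials, not for convolution kernels) to bound $e^{-\theta x}\int_{0^-}^{x}e^{\theta u}\mathrm{d}\tau(u)$, and then feeds this into Theorem \ref{finiteformF-Rth5} (itself a consequence of the sharp two-sided theorem), letting $\theta\to 0^{+}$ to produce the constant $\pi$. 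Without either this chain or an actual construction and optimality proof for your kernels $\Phi_\lambda^{\pm}$, your argument does not establish $\mathfrak{K}=\pi$.
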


We point out that Ingham obtained a much weaker version of Theorem \ref{finiteformF-R1sideth2} in \cite[pp. 472--473]{ingham1935} with constant
\begin{equation}
\label{F-Rintroeq4}
\displaystyle\mathfrak{K}=8\left(\frac{\pi e^{2}}{2\int_{0}^{1/2} \sin^2x/x^{2}\:\mathrm{d}x}-1\right)\approx 182.91
\end{equation}
in the inequality \eqref{F-Rintroeq3}. We will deduce Theorem \ref{finiteformF-R1sideth2} from a corollary of Theorem \ref{finiteformF-Rth2} (cf. Theorem \ref{finiteformF-Rth5}) and the Graham-Vaaler sharp version of the Wiener-Ikehara theorem \cite{grahamvaaler}. We mention that Graham and Vaaler obtained the optimal constants in the finite form of the Wiener-Ikehara theorem via the analysis of extremal $L^{1}$-majorants and minorants for the exponential function.
\section{Preliminaries}
\label{section preli}
We briefly discuss in this section the notion of local pseudofunction boundary behavior. We refer to \cite{Debruyne-VindasW-I,Debruyne-VindasComplexTauberians,korevaar2005,korevaar2005FR} and \cite[Sect. III.14]{korevaarbook} for its connections with Tauberian theory. We normalize the Fourier transform as $\hat{\varphi}(t)=\mathcal{F}\{\varphi;t\}=\int_{-\infty}^{\infty}e^{-itx}\varphi(x)\:\mathrm{d}x$. As is standard, we interpret  Fourier transforms in the sense of tempered distributions when the integral definition does not make sense. 

The space of global pseudomeasures is $PM(\mathbb{R})=\mathcal{F}(L^{\infty}(\mathbb{R}))$. We call $f\in PM(\mathbb{R})$ a global pseudofunction if additionally $\lim_{|x|\to\infty} \hat{f}(x)=0$. A Schwartz distribution $g\in\mathcal{D}'(U)$ is said to be a local pseudofunction on an open set $U$ if every point of $U$ has a neighborhood where $g$ coincides with a global pseudofunction; we then write $g\in PF_{loc}(U)$.  Equivalently, the latter holds if and only if  $\lim_{|x|\to\infty} \widehat{\varphi g}(x)=0$ for every smooth compactly supported test function 
$\varphi\in \mathcal{D}(U)$. It should then be clear that smooth functions are multipliers for local pseudofunctions. Also, $L^{1}_{loc}(U)\subsetneq PF_{loc}(U)$, in view of the Riemann-Lebesgue lemma.

Let $G(s)$ be analytic on the half-plane $\Re e\:s>0$ and let $U\subset \mathbb{R}$ be open. We say that $G$ has local pseudofunction boundary behavior on the boundary open subset $iU$ of the imaginary axis if  $G$ admits a local pseudofunction as distributional boundary value on $i U$, that is, if there is $g\in PF_{loc}(U)$ such that 
\begin{equation*}
\lim_{\sigma\to 0^{+}}\int_{-\infty}^{\infty}G(\sigma+it)\varphi(t)\mathrm{d}t=\left\langle g(t),\varphi(t)\right\rangle\: , \quad \mbox{for each } \varphi\in\mathcal{D}(U).
\end{equation*}
We emphasize that $L^1_{loc}$-boundary behavior, continuous, or analytic extension are very special cases of local pseudofunction boundary behavior. 
 
Suppose that $G$ is given by the Laplace transform of a tempered distribution $\tau\in \mathcal{S}'(\mathbb{R})$ with $\operatorname*{supp} \tau \subseteq [0,\infty)$, i.e., $G(s)=\langle \tau(x), e^{-sx}\rangle$ for $\Re e\: s>0$. In this case the Fourier transform $\hat{\tau}$ is the boundary distribution of $G$ on the whole  imaginary axis $i\mathbb{R}$. Since $\widehat {\tau \ast \phi }= \hat{\tau} \cdot \hat{\phi}$, we conclude that $G$ admits local pseudofunction boundary behavior on $i U$ if and only if 
\begin{equation}
\label{convformlpbvb} \lim_{x\to\infty} (\tau \ast \phi)(x)=0, \quad \mbox{for all } \phi\in\mathcal{S}(\mathbb{R}) \mbox{ with } \hat{\phi}\in\mathcal{D}(U),
\end{equation}
where $\mathcal{S}(\mathbb{R})$ stands for the Schwartz class of rapidly decreasing smooth functions.

\section{Proof of Theorem \ref{finiteformF-Rth2}}\label{section proof of F-R theorem}

\subsection{A reduction}
We start by making some reductions. Define
$$
\tau(x)=\int_{0}^{x}\rho(u)\:\mathrm{d}u-b, \quad x\geq 0,
$$
and set $\tau(x)=0$ for $x<0$. The Laplace transform of $\tau$ is precisely the analytic function (\ref{F-Rthintroeq2}). We have to show that if $M>\limsup_{x\to\infty} |\rho(x)|$, then
\begin{equation}
\label{F-Rproofeq1}
\limsup_{x\to\infty} |\tau(x)|\leq \frac{M \pi}{2\lambda}.
\end{equation}
Denote as $\operatorname*{Lip}(I; C)$ the set of all Lipschitz continuous functions on a interval $I$ with Lipschitz constant $C$. We obtain that there is $X>0$ such that $\tau\in \operatorname*{Lip}([X,\infty); M)$. Since Laplace transforms of compactly supported functions are entire functions, the behavior of $\tau$ on a finite interval is totally irrelevant for the local pseudofunction behavior of its Laplace transform. It is now clear that Theorem \ref{finiteformF-Rth2} may be equivalently reformulated as follows, which is in fact the statement that will be shown in this section.

\begin{theorem}
\label{finiteformF-Rth3} Let $\tau\in L^{1}_{loc}[0,\infty)$ be such that $\tau\in\operatorname*{Lip}([X,\infty); M)$ for some sufficiently large $X>0$. If there is $\lambda>0$ such that $\mathcal{L}\{\tau;s\}$ admits local pseudofunction boundary behavior on $(-i\lambda, i\lambda)$, then \eqref{F-Rproofeq1} holds. The constant $\pi/2$ in this inequality is best possible.
\end{theorem}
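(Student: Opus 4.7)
The plan is to establish Theorem~\ref{finiteformF-Rth3} by pairing two complementary ingredients: an explicit extremal function witnessing sharpness of $\pi/2$, and a convolution-by-kernel argument that reduces the upper bound to an extremal problem solved by the very same function.

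For sharpness, I would take the square wave $\rho_0(x)=\operatorname{sgn}(\sin\lambda x)$ and $b_0=\pi/(2\lambda)$. A direct computation with the periodic Laplace transform formula gives
\[
\mathcal{L}\{\rho_0;s\}=\frac{\tanh(\pi s/(2\lambda))}{s},
\]
whose only singularities are simple poles at $s=i\lambda(2k+1)$, $k\in\mathbb{Z}$. None of these lies in the \emph{open} segment $(-i\lambda,i\lambda)$, so $(\mathcal{L}\{\rho_0;s\}-b_0)/s$ admits analytic (hence local pseudofunction) extension across it. The corresponding primitive $\tau_0(x)=\int_0^x\rho_0-b_0$ is the triangle wave oscillating between $\pm\pi/(2\lambda)$, giving $\limsup|\tau_0|=\pi/(2\lambda)=M_0\pi/(2\lambda)$ with $M_0=\limsup|\rho_0|=1$; this is the required extremal example.

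For the upper bound, let $K$ be any probability density ($K\ge 0$, $\int K=1$) with $\hat K\in\mathcal{D}(-\lambda,\lambda)$. By the convolution characterization \eqref{convformlpbvb} of local pseudofunction boundary behavior, $(\tau\ast K)(x)\to 0$ as $x\to\infty$. Writing
\[
\tau(x)=\int_{\mathbb{R}}[\tau(x)-\tau(x-y)]K(y)\,\mathrm{d}y+(\tau\ast K)(x)
\]
and invoking the Lipschitz bound $|\tau(x)-\tau(x-y)|\le M|y|$ (valid once both arguments exceed $X$, with the remaining range contributing $o(1)$ since $\tau$ is bounded and $K\in L^1$), I obtain $\limsup|\tau(x)|\le M\int|y|K(y)\,\mathrm{d}y$. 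Thus Theorem~\ref{finiteformF-Rth3} reduces to showing
\[
C_\lambda:=\inf\left\{\int_{\mathbb{R}}|y|K(y)\,\mathrm{d}y : K\ge 0,\ \int K=1,\ \hat K\in\mathcal{D}(-\lambda,\lambda)\right\}=\frac{\pi}{2\lambda}.
\]

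The lower bound $C_\lambda\ge\pi/(2\lambda)$ is the pleasing point where the extremal function meets the extremal kernel: applying the preceding convolution estimate to $\tau_0$ itself (legitimate, since $\tau_0$ satisfies the theorem's hypotheses with constant $1$) yields $\pi/(2\lambda)=\limsup|\tau_0|\le\int|y|K(y)\,\mathrm{d}y$ for every admissible $K$. The main obstacle I anticipate is the matching upper bound $C_\lambda\le\pi/(2\lambda)$, i.e., constructing for each $\varepsilon>0$ an explicit probability kernel $K_\varepsilon$ with $\hat K_\varepsilon\in\mathcal{D}(-\lambda,\lambda)$ and $\int|y|K_\varepsilon(y)\,\mathrm{d}y<\pi/(2\lambda)+\varepsilon$. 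A natural route is the Akhiezer factorization $K=|\psi|^2$ with $\psi$ bandlimited to $(-\lambda/2,\lambda/2)$, which recasts the task as an $L^2$-minimization of $\int|y||\psi|^2$ over a Paley--Wiener space; I would expect the extremizing $\psi$ to encode the Fourier data of the triangle wave $\tau_0$, making explicit the interaction between the extremal example and the extremal convolution kernel that the authors advertise.
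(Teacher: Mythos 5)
Your sharpness example is fine (it is, up to a shift, exactly the paper's extremal triangle wave), and your convolution estimate $\limsup_{x\to\infty}|\tau(x)|\le M\int_{\mathbb{R}}|y|K(y)\,\mathrm{d}y$ is correct, but the reduction it leads to cannot give the constant $\pi/2$: the value of your extremal problem $C_\lambda$ is \emph{strictly larger} than $\pi/(2\lambda)$, so the anticipated matching upper bound $C_\lambda\le\pi/(2\lambda)$ is false. Concretely, take $\lambda=1$ and let $K\ge0$, $\int K=1$, $\operatorname{supp}\hat K\subseteq[-1,1]$. From $|x|=\tfrac{2}{\pi}\int_0^\infty t^{-2}(1-\cos(xt))\,\mathrm{d}t$ and Fubini one gets
\[
\int_{\mathbb{R}}|x|K(x)\,\mathrm{d}x=\frac{2}{\pi}\int_0^\infty\frac{1-\Re e\,\hat K(t)}{t^2}\,\mathrm{d}t=\frac{2}{\pi}\left(1+\int_0^1\frac{1-\Re e\,\hat K(t)}{t^2}\,\mathrm{d}t\right),
\]
and since $\hat K$ is a positive definite function with $\hat K(0)=1$ vanishing off $[-1,1]$, the Boas--Kac inequality (equivalently: factor $K=|\psi|^2$ with $\hat\psi$ supported in an interval of length $1$ and bound the lag-$t$ autocorrelation by the top eigenvalue of a path graph on at most $\lfloor 1/t\rfloor+1$ vertices) gives $\Re e\,\hat K(t)\le\cos\bigl(\pi/(\lfloor 1/t\rfloor+2)\bigr)$. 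Integrating over the blocks $[1/(n+1),1/n)$, on which $\int t^{-2}\mathrm{d}t=1$, yields $\int|x|K(x)\,\mathrm{d}x\ge\frac{2}{\pi}\bigl(1+\sum_{n\ge1}(1-\cos\frac{\pi}{n+2})\bigr)\approx 1.8$, comfortably above $\pi/2\approx 1.57$ (the partial sum up to $n=12$ already suffices). This is consistent with the paper, which notes that this nonnegative-kernel moment method (their footnote: the Jackson kernel gives $(12\log 2)/\pi\approx2.65$) can improve the constant $2$ but cannot reach sharpness. Your duality observation that testing with $\tau_0$ gives $C_\lambda\ge\pi/(2\lambda)$ is correct but points the wrong way. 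The structural obstruction is that your estimate is saturated only by the profile $\tau(x-y)=\tau(x)-M|y|$, which decreases linearly forever in both directions; it discards precisely the boundedness and forced oscillation of $\tau$ that produce $\pi/2$.

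This is why the paper's argument looks the way it does: it convolves with the \emph{sign-changing} kernel $K(x)=2\cos x/(\pi^2-4x^2)$ (Fourier transform $\cos(\pi t/2)$ on $[-1,1]$), derives $\int\tau(x+h)K(x)\,\mathrm{d}x\to0$, and then runs a genuinely global variational argument — the Claim about $\tau(h\pm\pi/2)$, the monotone comparison Lemma \ref{lemmonotone}, the zig-zag Lemma \ref{lemintegralfar}, and an induction over the intervals $[(L-1)\pi/2,L\pi/2]$ comparing $\tau(\cdot+h)$ with the explicit profile $\gamma$ built from the extremal $\alpha$ and the levels $\pm\beta_1$ — so that the negative lobes of $K$ interact with the oscillation that boundedness forces on $\tau$ across many periods. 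Any repair of your plan must similarly allow kernels of variable sign and use the Lipschitz information globally, not one increment at a time. Two smaller gaps, both fixable as in the paper: boundedness of $\tau$ (which you invoke to control the range $x-y<X$) must be proved first, via a nonnegative Schwartz kernel and the Lipschitz condition; and passing from $\hat K\in\mathcal{D}(-\lambda,\lambda)$ to general $L^1$ kernels with band-limited spectrum needs the density argument carried out after boundedness is known.
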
 
Next, we indicate that we may set w.l.o.g. $M = 1$ and $\lambda = 1$ in Theorem \ref{finiteformF-Rth3}. Indeed, suppose that we already showed the theorem in this case and assume that $\tau$ satisfies the hypotheses of the theorem for arbitrary $M$ and $\lambda$. Then $\tilde{\tau}(x) = M^{-1} \lambda \tau(\lambda^{-1}x)$ satisfies the hypotheses of Theorem \ref{finiteformF-Rth3} with $M = 1$ and $\lambda = 1$. Thus $\limsup_{x\to\infty} \left|\tilde{\tau}(x)\right| \leq \pi/2$, giving the desired result for $\tau$. Similarly if one finds some function showing that the result is sharp with $M=1$ and $\lambda=1$, the same transformation would lead to  the sharpness for arbitrary $M$ and $\lambda$.
\subsection{An example showing the optimality of the theorem}
\label{F-R subsection example}We now give an example for $\tau$ showing that Theorem \ref{finiteformF-Rth3} is sharp. The proof of the theorem itself will largely depend on this example. Define
\begin{equation*}
 \tau(x) = \begin{cases} 
0 & \mbox{if }  x \leq 0, \\
x, & \mbox{if } 0 \leq x \leq \pi/2,\\
-x + N\pi/2 & \mbox{if }  (N-1)\pi/2 \leq x \leq(N+1)\pi/2 \mbox{ for } N \equiv 2 \ (\mathrm{mod} \ 4), \\
x - N\pi/2 & \mbox{if } (N-1)\pi/2 \leq x \leq (N+1)\pi/2 \mbox{ for  } N \equiv 0 \ (\mathrm{mod} \ 4),
\end{cases}
\end{equation*}
where $N$ stands above for a positive integer.
Calculating its Laplace transform, one finds
\[
 \mathcal{L}\{\tau;s\} = \frac{1}{s^{2}} - \frac{2e^{-\pi s/2}}{s^{2}(1+e^{-\pi s})} = \frac{(1-e^{-\pi s/2})^{2}}{s^{2}(1+e^{-\pi s})}, \ \ \ \Re e \: s > 0,
\]
which admits analytic continuation through the segment $(-i, i)$, and thus has local pseudofunction boundary behavior on this interval of the imaginary axis. The function $\tau$ satisfies the hypotheses of Theorem \ref{finiteformF-Rth3} with $M = 1$ and $\lambda = 1$; we have here $\limsup_{x\to\infty} \left|\tau(x)\right| = \pi/2$. Hence the constant $\pi/2$ in \eqref{F-Rproofeq1} cannot be improved. If one wants an example for the sharpness of Theorem \ref{finiteformF-Rth2} (and Theorem \ref{finiteformF-Rth1}), one may take the piecewise constant function $\rho=\tau'$ as such an example.
\subsection{Analysis of a certain extremal function} The proof of the theorem will also depend on the properties of a certain extremal test function, namely,
\begin{equation*}
 K(x) = \frac{2\cos x}{\pi^{2}-4x^{2}}.
\end{equation*}
This function has many remarkable properties in connection to several extremal problems \cite{logan1983} and has already shown useful in Tauberian theory \cite{ingham1936,korevaarbook}.
Let us collect some facts that are relevant for the proof of Theorem \ref{finiteformF-Rth3}. Its Fourier transform is
\[
 \hat{K}(t) = \begin{cases}
 \cos(\pi t/2) & \mbox{if } \left|t\right|\leq 1,\\
 0 & \mbox{if } \left|t\right|\geq 1.
\end{cases}
\]
It satisfies \cite[Chap. III, Prop. 11.2]{korevaarbook} 
\begin{equation} \label{eqtestintegral}
 2 \int^{\pi/2}_{-\pi/2} K(x) \mathrm{d}x = \int_{-\infty}^{\infty}  \left|K(x)\right| \mathrm{d}x. 
\end{equation}
More important however, we need to know how this test function interacts with a modified version of our supposed extremal example, which we will denote throughout the rest of this section as
\begin{equation*} \alpha(x) := \begin{cases}
(N+1)\pi/2 - \left|x\right| & \mbox{if } N\pi/2 \leq \left|x\right| \leq (N+2)\pi/2 \mbox{ for  }N \equiv 0 \:  (\mbox{mod } 4),\\
-(N+1)\pi/2 + \left|x\right| & \mbox{if } N\pi/2 \leq \left|x\right| \leq (N+2)\pi/2 \mbox{ for } N \equiv 2 \: (\mbox{mod }  4).\\
\end{cases}
\end{equation*}
\begin{lemma} \label{lemtestexample} We have
\[
2\int^{\pi/2}_{-\pi/2} K(x) \alpha(x)\: \mathrm{d}x = \int_{-\infty}^{\infty} \left|K(x) \alpha(x)\right|\mathrm{d}x \quad \mbox{ and } \quad \int_{-\infty}^{\infty}K(x) \alpha(x) \mathrm{d}x = 0.
\]
\end{lemma}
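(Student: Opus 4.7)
My plan is to combine a Fourier-series identity for $\alpha$ with a direct sign analysis of the product $K\alpha$. First I would record that $\alpha$ is the even, $2\pi$-periodic triangle wave with $\alpha(x) = \pi/2 - x$ on $[0,\pi]$ and $\alpha(x) = x - 3\pi/2$ on $[\pi,2\pi]$, so a standard integration by parts delivers the absolutely and uniformly convergent Fourier series
\[
\alpha(x) = \frac{4}{\pi}\sum_{k=0}^{\infty}\frac{\cos((2k+1)x)}{(2k+1)^{2}}.
\]
Since $K \in L^{1}(\mathbb{R})$, dominated convergence would then allow me to integrate against $K$ term by term to obtain
\[
\int_{-\infty}^{\infty} K(x)\alpha(x)\,\mathrm{d}x = \frac{4}{\pi}\sum_{k=0}^{\infty}\frac{\hat{K}(2k+1)}{(2k+1)^{2}}.
\]
The explicit description of $\hat{K}$ gives $\hat{K}(1) = \cos(\pi/2) = 0$ and $\hat{K}(2k+1) = 0$ for $k \geq 1$, so every term in the sum vanishes and the second identity $\int_{\mathbb{R}} K\alpha\,\mathrm{d}x = 0$ drops out immediately.

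For the first identity I would establish that $K\alpha \geq 0$ on $[-\pi/2,\pi/2]$ and $K\alpha \leq 0$ on $\{|x|>\pi/2\}$. The inner inequality is trivial, since $\cos x$ and $\pi^{2}-4x^{2}$ are both nonnegative there and $\alpha(x) = \pi/2 - |x| \geq 0$. The outer inequality rests on what is really the heart of the lemma, a synchronization of zeros: the function $K$ has simple sign changes precisely at $\pm(2k+1)\pi/2$ for $k \geq 1$, its zeros at $\pm\pi/2$ being removable with $K(\pm\pi/2) = 1/(2\pi) > 0$, whereas $\alpha$ changes sign at $\pm(2k+1)\pi/2$ for every $k \geq 0$. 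On the first band $(\pi/2, 3\pi/2)$ numerator and denominator of $K$ are both negative so $K > 0$, while $\alpha < 0$, giving $K\alpha < 0$; thereafter each interval endpoint $(2k+1)\pi/2$ with $k \geq 1$ flips both $K$ and $\alpha$ simultaneously, so the product retains its sign throughout $\{|x|>\pi/2\}$.

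Combining the two ingredients I would split the relation $\int_{\mathbb{R}} K\alpha\,\mathrm{d}x = 0$ at $|x| = \pi/2$:
\[
\int_{-\pi/2}^{\pi/2}|K\alpha|\,\mathrm{d}x = \int_{-\pi/2}^{\pi/2} K\alpha\,\mathrm{d}x = -\int_{|x|>\pi/2} K\alpha\,\mathrm{d}x = \int_{|x|>\pi/2}|K\alpha|\,\mathrm{d}x,
\]
whence $\int_{\mathbb{R}} |K\alpha|\,\mathrm{d}x = 2\int_{-\pi/2}^{\pi/2} K\alpha\,\mathrm{d}x$, which is the first claimed identity. I expect the main obstacle to be the sign analysis on the tail, and specifically the observation that it is precisely the mismatch at $\pm\pi/2$ (where $\alpha$ vanishes but $K$ does not) that flips the product's sign once and for all; every subsequent zero is shared by $K$ and $\alpha$, which locks the sign of $K\alpha$ on the entire tail.
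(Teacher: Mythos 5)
Your proposal is correct, and both of its ingredients check out: the sign analysis ($K\alpha\geq 0$ on $[-\pi/2,\pi/2]$, $K\alpha\leq 0$ outside, because after $\pm\pi/2$ the sign changes of $K$ and $\alpha$ occur at the same points $\pm(2k+1)\pi/2$, $k\geq 1$) is exactly the observation the paper also makes, and your derivation of $\int_{\mathbb{R}}K\alpha=0$ is sound, since the triangle-wave Fourier series $\alpha(x)=\frac{4}{\pi}\sum_{k\geq 0}\cos((2k+1)x)/(2k+1)^{2}$ converges uniformly, $K\in L^{1}$, and $\int K(x)\cos((2k+1)x)\,\mathrm{d}x=\hat K(2k+1)=0$ because $\hat K$ is supported in $[-1,1]$ and vanishes at $\pm 1$. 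However, your route to the vanishing of the integral is genuinely different from the paper's: there the authors do not invoke the spectral structure of $\alpha$ at all, but instead split $\int_{0}^{\infty}K\alpha$ into intervals of length $\pi/2$, use the partial-fraction form of $K$ and changes of variables to rewrite each piece as an integral of $(\pi-2x)\cos x$ against shifted rational weights over $[0,\pi/2]$, and then observe that the resulting series telescopes with the leftover $N=0$ terms cancelling. Your argument is shorter and more conceptual: it explains structurally why the example interacts so perfectly with $K$ (the spectrum of the periodic function $\alpha$ sits on the odd integers, precisely where the band-limited kernel $\hat K(t)=\cos(\pi t/2)\mathbf{1}_{[-1,1]}(t)$ vanishes), at the modest cost of justifying term-by-term integration; the paper's computation is entirely elementary and self-contained, requiring nothing beyond freshman calculus, which is presumably why the authors chose it.
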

\begin{proof}
Indeed, realizing that $K(x)\alpha(x)$ is positive when $\left|x\right| < \pi/2$ and negative otherwise, it suffices to show that the integral of $K(x) \alpha(x)$ on $(-\infty,\infty)$ is $0$, or equivalently on $(0,\infty) $ since $K(x)\alpha(x)$ is even. We split the integral in intervals of length $\pi/2$. Let $N\in\mathbb{N}$ be divisible by $4$. Then,
\begin{align*}
\int^{(N+1)\pi/2}_{N\pi/2} K(x)\alpha(x)\mathrm{d}x & = \frac{1}{2\pi}\int^{(N+1)\pi/2}_{N\pi/2} \frac{(\pi(N+1) -2x) \cos x}{\pi + 2x} 
+ \frac{(\pi(N+1) -2x) \cos x}{\pi - 2x}\: \mathrm{d}x\\
& = \frac{1}{2\pi}\int^{\pi/2}_{0} \frac{(\pi - 2x)\cos x}{(N+1)\pi + 2x} - \frac{(\pi - 2x)\cos x}{(N-1)\pi + 2x}\: \mathrm{d}x,
\end{align*}
and
\begin{align*}
\int^{(N+2)\pi/2}_{(N+1)\pi/2} K(x)\alpha(x)\mathrm{d}x & = \frac{1}{2\pi}\int^{(N+2)\pi/2}_{(N+1)\pi/2} \frac{(\pi(N+1) -2x) \cos x}{\pi + 2x}  
+ \frac{(\pi(N+1) -2x) \cos x}{\pi - 2x}\: \mathrm{d}x
\\
& = \frac{1}{2\pi}\int^{0}_{-\pi/2} \frac{-(-\pi - 2x)\cos x}{(N+3)\pi + 2x} + \frac{-(-\pi - 2x)\cos x}{(-N-1)\pi - 2x}\:  \mathrm{d}x\\
& = \frac{1}{2\pi}\int^{\pi/2}_{0} \frac{(\pi - 2x)\cos x}{(N+3)\pi - 2x} - \frac{(\pi - 2x)\cos x}{(N+1)\pi - 2x}\:  \mathrm{d}x.
\end{align*}
Similarly,
\begin{equation*}
 \int^{(N+3)\pi/2}_{(N+2)\pi/2} K(x)\alpha(x)\mathrm{d}x = \frac{1}{2\pi}\int^{\pi/2}_{0} \frac{(\pi - 2x)\cos x}{(N+3)\pi + 2x} - \frac{(\pi - 2x)\cos x}{(N+1)\pi + 2x} \mathrm{d}x
\end{equation*}
and
\begin{equation*}
 \int^{(N+4)\pi/2}_{(N+3)\pi/2} K(x)\alpha(x)\mathrm{d}x = \frac{1}{2\pi}\int^{\pi/2}_{0} \frac{(\pi - 2x)\cos x}{(N+5)\pi - 2x} - \frac{(\pi - 2x)\cos x}{(N+3)\pi - 2x} \mathrm{d}x.
\end{equation*}
Summing over all $4$ pieces and over all $N\equiv 0 \:  (\mbox{mod } 4)$, we see that the sum telescopes and that sum of the remaining terms for $N = 0$ add up to $0$.  
\end{proof}
\begin{lemma} \label{lemdectest} $K'/K$ and $K$ are decreasing on $(0,\pi/2)$.
\end{lemma}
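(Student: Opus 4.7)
My plan is to work from the Weierstrass product expansion of cosine,
\[
\cos x = \prod_{n\geq 0}\left(1 - \frac{4x^{2}}{(2n+1)^{2}\pi^{2}}\right),
\]
and to notice that the $n=0$ factor is exactly $(\pi^{2}-4x^{2})/\pi^{2}$, so that it cancels precisely with the denominator of $K$. After this cancellation,
\[
K(x) = \frac{2}{\pi^{2}}\prod_{n\geq 1}\left(1 - \frac{4x^{2}}{(2n+1)^{2}\pi^{2}}\right),
\]
which reduces both claims of the lemma to elementary monotonicity statements about each factor.

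For the first assertion, I would simply observe that for $n\geq 1$ and $x\in(0,\pi/2)$ each factor is bounded below by $1 - 1/(2n+1)^{2}\geq 8/9 > 0$ and is strictly decreasing in $x$. Hence $K$ is (up to a positive constant) a product of positive, strictly decreasing functions on $(0,\pi/2)$ and is therefore itself strictly decreasing.

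For the second assertion, I would take a logarithmic derivative termwise --- justified by the locally uniform convergence of the product together with the strict positivity of all factors --- to obtain
\[
\frac{K'(x)}{K(x)} = -\sum_{n\geq 1}\frac{8x}{(2n+1)^{2}\pi^{2}-4x^{2}}.
\]
Each summand has the form $g_{n}(x) = 8x/(A_{n}-4x^{2})$ with $A_{n}=(2n+1)^{2}\pi^{2}\geq 9\pi^{2}>4x^{2}$, and a one-line computation gives $g_{n}'(x) = (8A_{n}+32x^{2})/(A_{n}-4x^{2})^{2} > 0$. Thus $K'/K$ is the negative of a series of increasing positive functions on $(0,\pi/2)$, and is therefore decreasing on that interval.

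The only real obstacle I anticipate is the routine justification of termwise logarithmic differentiation, but this is standard: the partial products converge to $K$ locally uniformly on $(0,\pi/2)$, and since all factors are bounded away from zero on compact subintervals, so do their logarithmic derivatives. With that in hand the argument sketched above is essentially immediate.
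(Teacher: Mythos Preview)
Your argument is correct and complete; the Weierstrass product for $\cos x$ does cancel the denominator of $K$, and from the resulting product both monotonicity statements follow by the elementary calculations you outline. The termwise logarithmic differentiation is indeed routine, since the summands $g_{n}(x)$ are $O(n^{-2})$ uniformly on compact subsets of $(0,\pi/2)$.

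Your route, however, is genuinely different from the paper's. The paper exploits instead the Fourier representation: since $\hat{K}(t)=\cos(\pi t/2)\,\mathbf{1}_{[-1,1]}(t)\geq 0$, one has
\[
K''(x) = -\frac{1}{2\pi}\int_{-1}^{1} t^{2}\cos(xt)\cos(\pi t/2)\,\mathrm{d}t,
\]
and for $x\in(0,\pi/2)$ the integrand is strictly positive, so $K''<0$. This gives $K$ concave and positive on $(0,\pi/2)$, hence log-concave, i.e.\ $(K'/K)'<0$; the same Fourier computation with one derivative shows $K'<0$. The paper's approach leans on the band-limited structure of $K$ (which is the reason $K$ appears in the proof at all), whereas your product expansion is more self-contained and yields an explicit closed formula for $K'/K$. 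Both are short; yours is arguably more elementary since it avoids any appeal to the Fourier side.
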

\begin{proof} We need to show that $(K'/K)'$ is negative on $(0,\pi/2)$, or, which amounts to the same, that $(\log K)''$ is negative there. This is equivalent to showing that $\log K $ is concave. It is thus sufficient to verify that $K$ is concave on $(0,\pi/2)$. We have for, $x \in (0,\pi/2)$,
\begin{equation*}
 K''(x) = -\frac{1}{2\pi}\int^{\infty}_{-\infty} e^{ixt} t^{2} \hat{K}(t)\mathrm{d}t = -\frac{1}{2\pi}\int^{1}_{-1} t^{2}\cos(xt)\cos(\pi t/2) \mathrm{d}t < 0.
\end{equation*}
The last calculation for $K''$ can easily be adapted to find that $K'(x) < 0$ for $x \in (0,\pi/2)$, hence $K$ is decreasing there.
\end{proof}
The next lemma can be shown by a simple computation.
\begin{lemma} \label{lemtranslatestest} Let $N\in\mathbb{N}$. The function $K(x+ N\pi)/K(x)$ reaches an extremum at $e_{N} := \pi(-N+\sqrt{N^{2}-1})/2$ and is monotone on $(-\pi/2,e_{N})$ and $(e_{N},\pi/2)$.
\end{lemma}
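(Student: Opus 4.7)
The plan is to exploit the fact that the cosine factors cancel out of the ratio. Writing $K(x) = 2\cos x/(\pi^{2}-4x^{2})$ and using $\cos(x+N\pi) = (-1)^{N}\cos x$, the ratio collapses to a rational function,
\[
\frac{K(x+N\pi)}{K(x)} = (-1)^{N}\cdot\frac{\pi^{2}-4x^{2}}{\pi^{2}-4(x+N\pi)^{2}}.
\]
This reduction is the key observation: once it is made, no transcendental behaviour remains and everything becomes a matter of elementary calculus.

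Next I would differentiate this rational function directly. A short quotient-rule computation, after grouping the terms of the numerator, shows that the sign of the derivative is governed (up to the harmless factor $(-1)^{N}$ and a positive denominator) by the quadratic
\[
4x^{2}+4N\pi\, x + \pi^{2},
\]
whose roots are precisely $\pi(-N\pm\sqrt{N^{2}-1})/2$. The smaller root is $\leq -N\pi\leq -\pi$, hence lies outside $[-\pi/2,\pi/2]$; only the larger root, namely $e_{N} = \pi(-N+\sqrt{N^{2}-1})/2$, falls in that interval.

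Finally I would observe that the denominator $\pi^{2}-4(x+N\pi)^{2}$ of the rational function does not vanish on $(-\pi/2,\pi/2)$, since its zeros $x = -N\pi\pm\pi/2$ lie outside this interval for every $N\geq 1$. Consequently the derivative is defined throughout $(-\pi/2,\pi/2)$, its sign is determined entirely by the quadratic above, and the quadratic changes sign only at $e_{N}$. This immediately yields monotonicity on each of $(-\pi/2,e_{N})$ and $(e_{N},\pi/2)$, and identifies $e_{N}$ as the unique extremum in the open interval.

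I do not anticipate a conceptual obstacle; the only mild subtlety is the degenerate case $N=1$, where the discriminant $N^{2}-1$ vanishes, so $e_{1}=-\pi/2$ sits at the boundary and the quadratic becomes $(2x+\pi)^{2}\geq 0$. In that case the derivative has constant sign on all of $(-\pi/2,\pi/2)$, the function is monotone throughout, and the statement of the lemma should be read with the understanding that the "extremum at $e_{N}$" degenerates to the left endpoint. This case should be flagged for completeness but requires no additional argument.
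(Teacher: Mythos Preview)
Your argument is correct and is precisely the ``simple computation'' the paper alludes to but omits: using $\cos(x+N\pi)=(-1)^{N}\cos x$ to reduce the ratio to a rational function, then differentiating to find that the sign of the derivative is governed by the quadratic $4x^{2}+4N\pi x+\pi^{2}$, whose only root in $(-\pi/2,\pi/2)$ is $e_{N}$. Your handling of the degenerate case $N=1$ (where $e_{1}=-\pi/2$ and the ratio is monotone on the whole interval) is also appropriate; the paper uses the lemma only for $N\geq 1$ with the convention that monotonicity on an empty interval is vacuous.
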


\subsection{Some auxiliary lemmas} We will also employ the following lemmas. The proof of the next lemma is simple and we thus omit it.

\begin{lemma} \label{lemmonotone} Let $\mu$ be a positive measure on $[a,b]$ and let $\phi$ be non-increasing. Let $f$ and $g$ be functions such that 
\[
	 \int_{[a,b]} f (x)\mathrm{d}\mu(x) = \int_{[a,b]} g(x) \mathrm{d}\mu (x)
\]
and there exists $c \in [a,b]$ such that $f(x) \geq g(x)$ on $[a,c]$ and $f(x) \leq g(x)$ on $(c,b]$. Then,
\[
 \int_{[a,b]} f(x)\phi(x) \mathrm{d}\mu(x) \geq \int_{[a,b]} g(x) \phi (x)\mathrm{d}\mu(x).
\]
\end{lemma}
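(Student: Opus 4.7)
The plan is to reduce the lemma to a pointwise sign inspection. Setting $h = f - g$, the equality assumption becomes $\int_{[a,b]} h \, \mathrm{d}\mu = 0$; the sign hypothesis becomes $h \geq 0$ on $[a,c]$ and $h \leq 0$ on $(c,b]$; and the conclusion we aim at becomes $\int_{[a,b]} h(x) \phi(x) \, \mathrm{d}\mu(x) \geq 0$. This is the standard Chebyshev-type rearrangement philosophy: a function with a single sign change, integrated against a weight whose monotonicity agrees with that sign pattern, yields a nonnegative integral.

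The key trick is to subtract the value $\phi(c)$ from $\phi$ inside the integral, which is cost-free because the resulting constant gets multiplied by $\int h \, \mathrm{d}\mu = 0$. Concretely, I would write
$$
\int_{[a,b]} h(x) \phi(x) \, \mathrm{d}\mu(x) = \phi(c) \int_{[a,b]} h(x) \, \mathrm{d}\mu(x) + \int_{[a,b]} h(x)\bigl(\phi(x) - \phi(c)\bigr) \, \mathrm{d}\mu(x),
$$
so that the first term on the right vanishes by hypothesis. I would then verify that the remaining integrand $h(x)(\phi(x) - \phi(c))$ is nonnegative at every point of $[a,b]$: on $[a,c]$ the factor $h(x)$ is nonnegative while $\phi(x) - \phi(c) \geq 0$ by monotonicity, so the product is $\geq 0$; on $(c,b]$ both signs are reversed, so the product is again $\geq 0$. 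Integrating gives the desired inequality.

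I do not anticipate any serious obstacle. The only subtle point worth mentioning is that $\phi$ need not be continuous at $c$, but since $\phi$ is non-increasing the pointwise value $\phi(c)$, whatever definition one chooses, still satisfies $\phi(x) \geq \phi(c)$ for $x \leq c$ and $\phi(x) \leq \phi(c)$ for $x \geq c$; hence the sign analysis above goes through unchanged. This matches the authors' remark that the proof is short enough to omit.
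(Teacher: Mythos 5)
Your proof is correct, and since the paper omits the proof of this lemma (declaring it simple), your argument---subtracting $\phi(c)$, using $\int (f-g)\,\mathrm{d}\mu=0$, and checking the pointwise sign of $(f-g)(\phi-\phi(c))$ on $[a,c]$ and $(c,b]$---is exactly the standard one-sign-change (Chebyshev-type) argument the authors evidently had in mind. Your remark about possible discontinuity of $\phi$ at $c$ is handled correctly by monotonicity, so there is nothing to add.
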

Naturally, the above lemma can be adapted to treat negative measures $\mu$ and non-decreasing functions $\phi$ and we will also refer to these adaptations as Lemma \ref{lemmonotone}.

We will use the ensuing class of functions for estimations.

\begin{definition} We say that a function $z$ is an upper pointed zig-zag function on $[-\pi/2,\pi/2]$ if there is a $c \in [-\pi/2, \pi/2]$ such that $z$ can be written as
\[
 z(x) = \begin{cases}
(x-c) + z(c) \quad \text{if } x \in [-\pi/2,c],\\
-(x-c) + z(c) \quad \text{if } x \in [c,\pi/2]. 
\end{cases}
\]
A function $z$ is called lower pointed zig-zag if $-z$ is upper pointed zig-zag.
\end{definition}                                                             
The following lemma will be a key ingredient in our arguments. It will allow us to work with piecewise linear functions instead of the more general Lipschitz continuous functions with Lipschitz constant 1.
\begin{lemma} \label{lemintegralfar} Let $I$ and $s$ be constants such that
\begin{equation} \label{eqconditionIs}
\int^{\pi/2}_{-\pi/2} (s - (x+\pi/2))K(x) \mathrm{d}x \leq I \leq \int^{\pi/2}_{-\pi/2} (s + (x+\pi/2))K(x) \mathrm{d}x
\end{equation} 
and set
\[
 A = \left\{f\in \operatorname{Lip}([-\pi/2,\pi/2];1) \mid \: f(-\pi/2) = s,\:  \int^{\pi/2}_{-\pi/2} f(x) K(x) \mathrm{d}x \leq I \right\} 
\]
and 
\[
 B = \left\{z \mid z \text{ is upper pointed zig-zag, } z(-\pi/2) \leq s,\: \text{and } \int^{\pi/2}_{-\pi/2} z(x) K(x) \mathrm{d}x = I\right\} .
\]
Then,        
\begin{equation} \label{eqresultcrucial}
 \inf_{f \in A} \int^{\pi/2}_{-\pi/2} f(x) K(x+N\pi) \mathrm{d}x \geq \inf_{z \in B} \int^{\pi/2}_{-\pi/2} z(x) K(x+N\pi) \mathrm{d}x,
\end{equation}
if $K(x+N\pi)$ is negative on $(-\pi/2, \pi/2)$, i.e., when $N \geq 2$ is even.  
\end{lemma}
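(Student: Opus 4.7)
The plan is to associate to each $f \in A$ an upper pointed zig-zag $z \in B$ satisfying
\[
\int_{-\pi/2}^{\pi/2} z(x)K(x+N\pi)\,\mathrm{d}x \;\leq\; \int_{-\pi/2}^{\pi/2} f(x)K(x+N\pi)\,\mathrm{d}x.
\]
The first candidate I would try is the upper pointed zig-zag $z$ with $z(-\pi/2)=s$ and peak position $c^{\ast}\in[-\pi/2,\pi/2]$ chosen so that $\int z\,K\,\mathrm{d}x = I$. The existence of $c^{\ast}$ follows from the hypothesis \eqref{eqconditionIs} and the intermediate value theorem, since as $c$ increases from $-\pi/2$ to $\pi/2$ the integral $\int z_{c}\,K\,\mathrm{d}x$ varies continuously and monotonically between the two bounds in \eqref{eqconditionIs}. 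A standard extreme-point / Lagrangian reasoning also lets us reduce to the case $\int f K\,\mathrm{d}x = I$, because the only way the constraint $\int f K \leq I$ fails to be active at the infimum is when the unconstrained minimizer $f(x)=s+(x+\pi/2)$ itself lies in $A$, a degenerate situation in which that function is already in $B$.

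The comparison of $z$ and $f$ proceeds from the slope profile: since $z'=+1$ on $(-\pi/2,c^{\ast})$ and $z'=-1$ on $(c^{\ast},\pi/2)$, while $|f'|\leq 1$, the function $w:=z-f$ satisfies $w(-\pi/2)=0$ and is non-decreasing on $[-\pi/2,c^{\ast}]$ and non-increasing on $[c^{\ast},\pi/2]$. Hence either (Case A) $w\geq 0$ throughout, or (Case B) there is a unique $\xi\in(c^{\ast},\pi/2)$ with $w\geq 0$ on $[-\pi/2,\xi]$ and $w\leq 0$ on $[\xi,\pi/2]$. In Case A the conclusion is immediate: since $K(x+N\pi)\leq 0$ on the interval, $\int w\,K(\cdot+N\pi)\,\mathrm{d}x \leq 0$, which is exactly the desired inequality.

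Case B is the real obstacle. Writing $\mathrm{d}\mu:=K(x)\,\mathrm{d}x$ (a positive measure) and $\phi:=K(\cdot+N\pi)/K<0$, the task becomes to show $\int w\,\phi\,\mathrm{d}\mu \leq 0$, given that $\int w\,\mathrm{d}\mu = 0$ and the stated one-sign-change pattern of $w$. Lemma \ref{lemmonotone} would close the argument at once if $\phi$ were monotone on the whole of $[-\pi/2,\pi/2]$, but Lemma \ref{lemtranslatestest} only provides monotonicity on each of the two subintervals $[-\pi/2,e_{N}]$ and $[e_{N},\pi/2]$. The plan is to use the freedom $z(-\pi/2)\leq s$ (rather than $=s$) permitted in the definition of $B$: by lowering $z(-\pi/2)$ and compensating with a larger peak $c$ so that $\int z\,K\,\mathrm{d}x = I$ is preserved, one obtains a one-parameter family of admissible zig-zags along which the location of the sign-change $\xi$ of $w$ varies continuously. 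I would then select a member of this family for which $\xi$ lies on the favourable side of $e_{N}$, so that Lemma \ref{lemmonotone} (or an obvious adaptation to negative/increasing weights) can be applied on the subinterval where $\phi$ is monotone, after splitting the integrand at $e_{N}$ and rebalancing the contributions on the two halves. The hardest step — and what I expect to be the core of the proof — is verifying that this rebalancing can always be carried out and that the two partial applications of Lemma \ref{lemmonotone} combine cleanly into the required global inequality.
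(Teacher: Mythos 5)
Your setup (reduction to the equality case, the single zig-zag anchored at $z(-\pi/2)=s$ with peak chosen by the intermediate value theorem, and the single-crossing comparison of $w=z-f$) is fine as far as it goes, but the proof has a genuine gap exactly at the point you yourself flag as ``the hardest step'': you never show how to handle the non-monotonicity of $\phi=K(x+N\pi)/K(x)$, which by Lemma \ref{lemtranslatestest} is only non-increasing on $[-\pi/2,e_N]$ and non-decreasing on $[e_N,\pi/2]$. With a single sign change of $w$ at $\xi$ and $\int w\,K\,\mathrm{d}x=0$, the inequality $\int w\,\phi\,K\,\mathrm{d}x\le 0$ can genuinely fail for a V-shaped weight (e.g.\ if the positive mass of $w$ sits near $-\pi/2$ where $\phi\approx 0$ and the negative mass sits near $e_N$ where $\phi$ is most negative), so Case B cannot be waved through. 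Worse, the repair you propose --- lowering $z(-\pi/2)$ below $s$ and raising the peak to keep $\int zK\,\mathrm{d}x=I$ --- destroys the very structure you rely on: once $z(-\pi/2)<f(-\pi/2)$ the difference $w$ can change sign twice (negative, then positive, then negative), so the one-crossing hypothesis of Lemma \ref{lemmonotone} is no longer available on either half, and it is not clear that any member of your one-parameter family places a single crossing on ``the favourable side'' of $e_N$. As written, the argument establishes the lemma only in the degenerate situations (Case A, or equality in \eqref{eqconditionIs}), not in general.

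The paper resolves this by splitting at $e_N$ \emph{before} building the zig-zag, rather than by adjusting a globally constructed one. Given $f$, it forms the piecewise-linear function $j$ with slope $+1$ on $[-\pi/2,e_N]$ and slope $-1$ on $[e_N,\pi/2]$, chosen so that $\int jK\,\mathrm{d}x=\int fK\,\mathrm{d}x$ separately on each of the two subintervals; since $f$ is $1$-Lipschitz, $f-j$ is monotone on each piece, hence crosses zero once there, and $\phi$ is monotone on each piece, so Lemma \ref{lemmonotone} applies subinterval by subinterval and gives $\int fK(\cdot+N\pi)\ge\int jK(\cdot+N\pi)$ on each. The possible jump of $j$ at $e_N$ is then repaired by keeping the branch with the smaller value at $e_N$ and re-solving for the unique upper pointed zig-zag on the other half that matches the $K$-integral of $f$ there and is continuous at $e_N$; one further application of Lemma \ref{lemmonotone} compares $f$ with this $z$, and the construction automatically gives $z(-\pi/2)\le s$, so $z\in B$ and $C_f\ge C_z$. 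If you want to salvage your approach, you essentially have to reproduce this two-piece matching at $e_N$; the single global zig-zag with prescribed left endpoint does not suffice.
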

\begin{proof}
We set $C_{f} := \int^{\pi/2}_{-\pi/2} f(x) K(x+N\pi) \mathrm{d}x$. We may clearly assume that the inequality regarding the integral in the definition of $A$ is actually an equality. By Lemma \ref{lemtranslatestest}, there is $e_{N}$ such that $K(x+N\pi)/K(x)$ is non-increasing on $[-\pi/2,e_{N}]$ and non-decreasing on $[e_{N}, \pi/2]$. Let $f \in A$ arbitrary. We will construct a zig-zag function $z \in B$ for which $C_{f} \geq C_{z}$. Let us first consider $j$ which is defined on $[-\pi/2,e_{N}]$ as the straight line with slope $1$ such that $\int_{-\pi/2}^{e_{N}} f(x) K(x) \mathrm{d}x = \int_{-\pi/2}^{e_{N}} j(x) K(x) \mathrm{d}x$ and on $(e_{N},\pi/2]$ as the straight line with slope $-1$ such that $\int_{e_{N}}^{\pi/2} f(x) K(x) \mathrm{d}x = \int_{e_{N}}^{\pi/2} j(x) K(x) \mathrm{d}x$. The fact that $f\in \operatorname{Lip}([-\pi/2,\pi/2];1)$ allows us to apply Lemma \ref{lemmonotone} to the positive measure $K(x)\mathrm{d}x$ and the non-increasing (resp. non-decreasing) function $K(x+N\pi)/K(x)$ on the interval $[-\pi/2, e_{N}]$ (resp. $[e_{N}, \pi/2]$) to find that $\int f(x) K(x+ N\pi)\mathrm{d}x \geq \int j(x) K(x+ N\pi)\mathrm{d}x$ on both intervals $[-\pi/2, e_{N}]$ and $[e_{N}, \pi/2]$. Note that we must necessarily have $j(-\pi/2)\leq f(-\pi/2)=s$, by the Lipschitz continuity of $f$. The function $j$ may not be a zig-zag function however, as $j$ may have a discontinuity at $e_{N}$. We set $z = j$ on the interval where $j$ takes the lowest value at $e_{N}$, i.e., if $j(e_{N}) \leq \lim_{x \rightarrow e_{N}^{+}} j(x)$ then we set $z := j$ on $[-\pi/2, e_{N}]$, otherwise we set $z := j$ on $[e_{N}, \pi/2]$. Notice that in the first case we have by construction that $z(-\pi/2) \leq s$ and (in both cases) $z(e_{N}) \geq f(e_{N})$. We then extend $z$ on the remaining interval as the unique upper pointed zig-zag function such that $\int f(x) K(x) \mathrm{d}x = \int z(x) K(x) \mathrm{d}x$ there and makes $z$ a continuous function at the point $e_{N}$. As was the case for the case for the comparison between $j$ and $f$, one can use Lemma \ref{lemmonotone} (in exactly the same way) to compare $f$ and $z$ and conclude that $\int f(x) K(x+ N\pi)\mathrm{d}x \geq \int z(x) K(x+ N\pi)\mathrm{d}x$ on both intervals $[-\pi/2,e_{N}]$ and $[e_{N},\pi/2]$; whence $C_{f} \geq C_{z}$. From the construction it is also clear that $z(-\pi/2) \leq s$ and thus $z \in B$. 
\end{proof}

Lemma \ref{lemintegralfar} has an obvious analogue when $K(x+N\pi)$ is positive on the interval $(-\pi/2, \pi/2)$, namely, when $N \geq 1$ is odd. One then needs to replace in the definition of $B$ upper pointed zig-zag functions by lower pointed ones and the inequality $z(-\pi/2) \leq s$ needs to be reversed, and in the definition of $A$ the inequality regarding the integral also has to be reversed. The proof is basically the same and will therefore be omitted. This analogue will also be referred to as Lemma \ref{lemintegralfar}. We also note that it is easy to see that the infimum in (\ref{eqresultcrucial}) with respect to the set $B$ is in fact a minimum.

\subsection{The actual proof}\label{subsection F-Rproof}
We now come to the proof of Theorem \ref{finiteformF-Rth3}. We may modify $\tau$ on $[0,X]$ in any way we like because this does not affect the local pseudofunction behavior of its Laplace transform. So, we assume that $\tau\in \operatorname*{Lip}(\mathbb{R};1)$, namely,
\begin{equation}
\label{Lip1eq}
|\tau(x)-\tau(y)|\leq |x-y|, \quad \forall x,y\in \mathbb{R}.
\end{equation}  

The Lipschitz continuity of $\tau$ gives the bound $\tau(x)=O(x)$, so that we can view $\tau$ as a tempered distribution with support in $[0,\infty)$. As indicated in Section \ref{section preli}, the local pseudofunction boundary behavior of the Laplace transform  $\mathcal{L}\{\tau;s\}$ on $(-i,i)$ then yields (\ref{convformlpbvb}) with $U=(-1,1)$. From here we can prove that $\tau$ is bounded\footnote{This also follows directly from \cite[Thm.  3.1]{Debruyne-VindasComplexTauberians}, where we have shown that a much weaker one-sided Tauberian condition (bounded decrease) suffices to deduce boundedness. In the case under consideration we have however a two-sided condition and the proof of the assertion then becomes much easier and shorter.}. In fact, select a non-negative test function $\phi\in\mathcal{S}(\mathbb{R})$ with Fourier transform supported in $(-1,1)$ and $\int_{-\infty}^{\infty}\phi(x)\mathrm{d}x=1$. Applying \eqref{convformlpbvb} and \eqref{Lip1eq}, we obtain\footnote{One obtains the bound $\limsup_{x\to\infty}|\tau(x)|\leq\int_{-\infty}^{\infty}\left|x\phi(x)\right|\mathrm{d}x$, which, upon a density argument, remains valid for all $\phi\in L^{1}(\mathbb{R}, (|x|+1)\mathrm{d}x)$ such that $\int_{-\infty}^{\infty}\phi(x)\mathrm{d}x=1$ and $\operatorname*{supp}\hat{\phi} \subseteq [-1,1]$. Ingham obtained \cite{ingham1935} $\limsup_{x\to\infty}|\tau(x)|< 6$ by choosing the Jackson kernel $\displaystyle\phi(x)=96\sin^{4} (x/4)/(\pi x^{4})$ and using an intermediate inequality (cf. \cite[Lemma (II), p. 465]{ingham1935}) for $\int_{-\infty}^{\infty}\left|x\phi(x)\right|\mathrm{d}x$. Explicit evaluation of the latter integral (cf. \cite[Eq. (ii.b), p. 448]{Kolk2003}) however delivers the much better inequality $\limsup_{x\to\infty}|\tau(x)|\leq (12/\pi)\int_{0}^{\infty}\sin^{4}x/x^3\mathrm{d}x= (12\log 2)/\pi\approx 2.65$.}
$$
|\tau(h)|= \left|\int_{-\infty}^{\infty}(\tau(h)-\tau(x+h))\phi(x)\mathrm{d}x+o(1)\right|\leq \int_{-\infty}^{\infty}\left|x\phi(x)\right|\mathrm{d}x+o(1)=O(1).
$$
Since we now know that $\tau\in L^{\infty}(\mathbb{R})$, we conclude that $(\tau\ast \phi)(h)=o(1)$ actually holds for all $\phi$ in the closure of $\mathcal{F}(\mathcal{D}(-1,1))$, taken in the Banach space $L^{1}(\mathbb{R})$, i.e., for every $L^{1}$-function $\phi$ whose Fourier transform vanishes outside $[-1,1]$. This means that we can take here the extremal kernel $\phi=K$. Summarizing, we have arrived to the key relation
\begin{equation}
\label{convolutioneqK}
\lim_{h\to\infty} \int_{-\infty}^{\infty} \tau(x+h)K(x)\mathrm{d}x=0.
\end{equation}
In the sequel, we only make use of (\ref{Lip1eq}) and (\ref{convolutioneqK}).

The idea of the proof of the inequality
\begin{equation}
\label{F-Rproofeq1bis}
\limsup_{x\to\infty} |\tau(x)|\leq \frac{\pi}{2}
\end{equation}
goes as follows. If $\tau(h)$ is `too large', then the Lipschitz condition (\ref{Lip1eq}) forces that a substantial portion of the integral $\int^{\infty}_{-\infty} \tau(x+h)K(x) \mathrm{d}x$ comes from the contribution of a neighborhood of the origin. If this is too excessive ($\tau(h)$ is too large), the tails of the integral will not be able to compensate this excess and the total integral will be large, violating the condition (\ref{convolutioneqK}). 

Let $\varepsilon>0$ be a small number that will be specified later. Our analysis makes use of the smooth function
\[
 f(y) := \int^{\pi/2}_{-\pi/2} \tau(x+y)K(x)\mathrm{d}x.
\]
The function $\tau$ is bounded, so is $f$. We may modify $\tau$ on a finite interval in such a fashion that \eqref{Lip1eq} still holds and the global supremum of $|f|$ stays sufficiently close to its limit superior at infinity. Furthermore, changing $\tau$ in this way  does not affect our hypothesis \eqref{convolutioneqK}. Thus, we assume 
\begin{equation}
\label{F-Rsup-limsupcond}
\limsup_{y\to\infty}\left| \int^{\pi/2}_{-\pi/2} \tau(x+y)K(x)\mathrm{d}x\right|\geq \sup_{y\in\mathbb{R}}\left| \int^{\pi/2}_{-\pi/2} \tau(x+y)K(x)\mathrm{d}x\right|-\frac{\varepsilon}{2}.
\end{equation}

Let us choose $h>0$ such that
\begin{equation} \label{eqconvolutionrelation}
 \left|\int^{\infty}_{-\infty} \tau(x+h) K(x) \mathrm{d}x\right| \leq \varepsilon
\end{equation}
and
\[
 \left|\int^{\pi/2}_{-\pi/2} \tau(x+h) K(x) \mathrm{d}x\right|
\]
is `maximal', i.e. (assuming w.l.o.g. that it is positive),
\begin{equation} \label{eqmaxcond1}
 \int^{\pi/2}_{-\pi/2} \tau(x+h) K(x) \mathrm{d}x >  
 \sup_{y\in\mathbb{R}}\left|\int^{\pi/2}_{-\pi/2} \tau(x+y) K(x) \mathrm{d}x\right| - \varepsilon
\end{equation}
and
\begin{equation} \label{eqmaxcond2}
 0 \leq f'(h) <\frac{ \varepsilon}{2\pi} = \varepsilon K(\pi/2).
\end{equation}
That such a choice of $h$ is possible simply follows from the fact that $f$ is bounded and \eqref{F-Rsup-limsupcond}. Indeed, assuming without loss of generality that the set of all $h$ such that \eqref{eqmaxcond1} holds is infinite (the condition \eqref{F-Rsup-limsupcond} ensures that it is non-empty and unbounded), we have that either $f$ has infinitely many local maxima accumulating to $\infty$ on this set or that there is an neighborhood of $\infty$ where $f$ is increasing. In the latter case $f$ would have a limit and $\liminf_{y\to\infty}f'(y)=0$.
 
Let us now suppose that \eqref{F-Rproofeq1bis} does not hold, that is, 
\begin{equation}\label{finiteFR-contradictioneq}
\limsup_{x\to\infty} |\tau(x)|> \eta+\pi/2.
\end{equation} 
for some $\eta>0$. Our task in the rest of the section is to prove that \eqref{finiteFR-contradictioneq} conflicts with \eqref{eqconvolutionrelation}.

 We choose $\beta_{0}$ and $\beta_{1}$ in such a way that
\begin{equation} \label{eqdefbeta0}
 \int^{\pi/2}_{-\pi/2} \tau(x+h) K(x) \mathrm{d}x = \int^{\pi/2}_{-\pi/2} (\beta_{0} + \alpha(x))K(x)\mathrm{d}x
\end{equation}
and
\begin{equation} \label{eqdefbeta1}
\sup_{y\in\mathbb{R}}\left|\int^{\pi/2}_{-\pi/2} \tau(x+y) K(x) \mathrm{d}x\right| = \int^{\pi/2}_{-\pi/2} (\beta_{1} + \alpha(x))K(x)\mathrm{d}x.
\end{equation}
From (\ref{eqmaxcond1}), it follows that $\beta_{0} > \beta_{1} - c\varepsilon$, where $c = (\int^{\pi/2}_{-\pi/2}K(x)\mathrm{d}x)^{-1}>0$. We also have
$\beta_{1}> 0$, as follows from \eqref{finiteFR-contradictioneq} and \eqref{Lip1eq}. We actually have the lower bound\footnote{Here $\beta_1$ depends on $\varepsilon$ because of our assumption (\ref{F-Rsup-limsupcond}), but, in contrast, the constant $\eta$ is independent of it. The lower bound (\ref{FR-lowerboundbeta1}) then plays a role below.}
\begin{equation}
\label{FR-lowerboundbeta1}
\beta_1>  \eta.
\end{equation} 
 In fact, if $y$ is a point where $|\tau(y)|>  \eta'+\pi/2 $ with $\eta'>\eta$, the Lipschitz condition \eqref{Lip1eq} implies that $|\tau(x+y)|> \eta'+\alpha(x)$ and $\tau(x+y)$ also has the same sign as $\tau(y)$ for all $x\in [-\pi/2,\pi/2]$; hence, \eqref{eqdefbeta1} yields \eqref{FR-lowerboundbeta1}.
\begin{claim} \label{claim} Let $h$ be chosen as above, then $\tau(h+\pi/2) \geq \beta_{0}-\varepsilon$ as well as $\tau(h-\pi/2) \geq \beta_{0}-\varepsilon$.
\end{claim}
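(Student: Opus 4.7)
The plan is to argue by contradiction: assume $\tau(h+\pi/2)<\beta_{0}-\varepsilon$ (the inequality for $\tau(h-\pi/2)$ follows from an entirely analogous argument obtained by reflecting about $h$ and invoking the evenness of $K$). Write $\tau_{L}=\tau(h-\pi/2)$ and $\tau_{R}=\tau(h+\pi/2)$. The Lipschitz condition \eqref{Lip1eq} delivers the pointwise bound
\[
\tau(x+h)\leq z_{+}(x):=\min\bigl(\tau_{L}+x+\pi/2,\ \tau_{R}+\pi/2-x\bigr)\qquad\text{on }[-\pi/2,\pi/2],
\]
where $z_{+}$ is the upper pointed zig-zag with endpoints $\tau_{L},\tau_{R}$ and peak value $(\tau_{L}+\tau_{R})/2+\pi/2$ attained at $c=(\tau_{R}-\tau_{L})/2$; the Lipschitz bound forces $|c|\leq\pi/2$.

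Since $K\geq 0$ on $[-\pi/2,\pi/2]$ and $z_{+}(x)=(\tau_{L}+\tau_{R})/2+\pi/2-|x-c|$, one obtains
\[
\int_{-\pi/2}^{\pi/2} z_{+}(x)K(x)\,\mathrm{d}x=\frac{\tau_{L}+\tau_{R}}{2}\,c_{K}+I_{\alpha}-\bigl[P(c)-P(0)\bigr],
\]
where $c_{K}=\int_{-\pi/2}^{\pi/2}K(x)\,\mathrm{d}x$, $P(c)=\int_{-\pi/2}^{\pi/2}|x-c|K(x)\,\mathrm{d}x$, and $I_{\alpha}=(\pi/2)c_{K}-P(0)$. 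Because $K$ is non-negative and even, $P$ is minimized at $c=0$, so $P(c)\geq P(0)$. Combined with \eqref{eqdefbeta0}, this forces $(\tau_{L}+\tau_{R})/2\geq\beta_{0}$, and consequently $\tau_{L}>\beta_{0}+\varepsilon$.

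The contradiction is extracted from \eqref{eqmaxcond1} and \eqref{eqmaxcond2}. Observe that $\int_{-\pi/2}^{\pi/2}\tau'(x+h)\,\mathrm{d}x=\tau_{R}-\tau_{L}<-2\varepsilon$, while $f'(h)=\int_{-\pi/2}^{\pi/2}\tau'(x+h)K(x)\,\mathrm{d}x\in[0,\varepsilon K(\pi/2))$. This mismatch forces $\tau'(\cdot+h)$ to be predominantly $+1$ where $K$ is largest (near $0$) and predominantly $-1$ where $K$ is smallest (near $\pm\pi/2$), so the bulk of $\tau$ must sit near the peak position $h+c$. Translating the window to $h^{*}=h+c<h$ and combining the Lipschitz control on $\tau$ with the shape information just derived, the aim is to show
\[
\int_{-\pi/2}^{\pi/2}\tau(x+h^{*})K(x)\,\mathrm{d}x>f(h)+\varepsilon,
\]
which would violate the near-maximality \eqref{eqmaxcond1}.

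The principal technical obstacle is making this last estimate sufficiently quantitative: one must carefully balance the slack $\tau_{L}+\tau_{R}-2\beta_{0}\geq 0$ arising from the Lipschitz step, the deficit $\beta_{0}-\varepsilon-\tau_{R}>0$, and the $\varepsilon$-sized budget on $f'(h)$, so that the gain from re-centering the window at $h^{*}$ comes out strictly larger than $\varepsilon$ rather than merely positive.
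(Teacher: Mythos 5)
Your first step is correct, and in fact slightly sharper than what the paper records at the corresponding stage: comparing $\tau(\cdot+h)$ with the upper pointed zig-zag $z_{+}$, using that $P(c)\geq P(0)$ for the even non-negative weight $K$, and invoking \eqref{eqdefbeta0} does yield $(\tau_{L}+\tau_{R})/2\geq\beta_{0}$, hence $\tau_{L}>\beta_{0}+\varepsilon$ under your contradiction hypothesis (the paper only notes that both endpoints cannot lie below $\beta_{0}$). The problem is the second step: the decisive inequality $\int_{-\pi/2}^{\pi/2}\tau(x+h^{*})K(x)\,\mathrm{d}x>f(h)+\varepsilon$ is only announced as ``the aim,'' and you yourself flag the quantitative balancing needed to reach it as an unresolved obstacle. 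Since that inequality is where the contradiction with \eqref{eqmaxcond1} would come from, the proposal stops exactly at the point where the Claim actually has to be proved; as written it is a plan, not a proof.

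Moreover, the plan as stated cannot be completed from the ingredients you list unless \eqref{eqmaxcond2} is used in a genuinely quantitative way, which your heuristic about $\tau'$ being ``predominantly $+1$ near $0$'' does not yet do. To see the obstacle, consider the profile $\tau(x+h)=\beta_{0}'+\alpha(x+\delta)$ (a left-shifted triangular wave, extended periodically), with $\delta$ slightly larger than $\varepsilon$ and $\beta_{0}'=\beta_{0}+O(\delta^{2})$ adjusted so that \eqref{eqdefbeta0} holds. This satisfies \eqref{Lip1eq}, has $\tau_{R}\approx\beta_{0}-\delta<\beta_{0}-\varepsilon$ and $\tau_{L}\approx\beta_{0}+\delta$, and yet $\sup_{y}f(y)-f(h)=O(\delta^{2})\ll\varepsilon$: re-centering the window gains far too little, and \eqref{eqmaxcond1} is not violated. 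What rules this profile out is only the derivative condition, since here $f'(h)<0$ with $|f'(h)|$ of order $\varepsilon$; so any completion of your argument must extract a contradiction from $0\leq f'(h)<\varepsilon K(\pi/2)$ with exactly $\varepsilon$-precision. (Note also that the two endpoint cases are then no longer mirror images: reflection flips the sign of $f'(h)$, so one case must use $f'(h)\geq0$ and the other $f'(h)<\varepsilon K(\pi/2)$.) This is precisely where the paper's proof takes a different and sharper route: it writes $f'(h)=-\int_{-\pi/2}^{\pi/2}\tau(x+h)K'(x)\,\mathrm{d}x+\bigl(\tau(h+\pi/2)-\tau(h-\pi/2)\bigr)K(\pi/2)$, proves the sign statement $\int_{-\pi/2}^{\pi/2}K'(x)\tau(x+h)\,\mathrm{d}x\leq0$ in the relevant case by comparing $\tau(\cdot+h)-\beta_{0}-\alpha$ with a constant through Lemma \ref{lemmonotone} and the monotonicity of $K'/K$ from Lemma \ref{lemdectest}, and then the endpoint gap of size $>\varepsilon$ multiplied by $K(\pi/2)$ contradicts \eqref{eqmaxcond2} directly. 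You would need to supply an argument of that sharpness (or adopt the paper's) to close the gap.
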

Indeed, the Lipschitz condition \eqref{Lip1eq} implies that they cannot be both smaller than $\beta_{0}$, as (\ref{eqdefbeta0}) could otherwise not be realized. Suppose w.l.o.g that $\tau(h-\pi/2) < \beta_{0} - \varepsilon$ and $\tau(h+\pi/2)\geq \beta_{0}$. We will show that this violates the maximality assumption (\ref{eqmaxcond2}). We have
\[
 f'(h) = -\int^{\pi/2}_{-\pi/2} \tau(x+h)K'(x)\mathrm{d}x + \tau(\pi/2 + h)K(\pi/2) - \tau(-\pi/2+h)K(-\pi/2).
\]
To prove the claim, it thus suffices to show that $\int^{\pi/2}_{-\pi/2} K'(x) \tau(x+h)\mathrm{d}x\leq 0$. Noting that $\int^{\pi/2}_{-\pi/2} K'(x) (\beta_{0} + \alpha(x))\mathrm{d}x = 0$ and subtracting $\beta_{0} + \alpha(x)$ from $\tau(x+h)$, it would be sufficient to prove that there is no function $\rho(x)$ such that $\rho(-\pi/2) < 0$, $\rho$ is non-increasing on $[-\pi/2,0]$, non-decreasing on $[0,\pi/2]$, $\int^{\pi/2}_{-\pi/2} K'(x) \rho(x)\mathrm{d}x > 0$ and $\int^{\pi/2}_{-\pi/2} K(x) \rho(x)\mathrm{d}x = 0$. Suppose that there is such a function $\rho$. Since $\rho(-\pi/2) < 0$, $\rho$ is non-increasing on $[-\pi/2,0]$ and $K'$ is positive on $(-\pi/2,0)$, it follows that $\int^{0}_{-\pi/2} \rho(x) K'(x) \mathrm{d}x < 0$. We set $R$ as the constant such that $\int^{\pi/2}_{0} \rho(x) K(x) \mathrm{d}x = R\int^{\pi/2}_{0} K(x) \mathrm{d}x$. Note that $R \geq 0$ since $\int^{\pi/2}_{-\pi/2} \rho(x) K(x) \mathrm{d}x = 0$
 and $\int^{0}_{-\pi/2} \rho(x) K(x) \mathrm{d}x < 0$, because $\rho$ is negative on $(-\pi/2,0)$.
We apply Lemma \ref{lemmonotone} with the positive measure $K(x) \mathrm{d}x$ and the weight function $\phi(x) = K'(x)/K(x)$ in order to compare the function $\rho$ with the constant function $R$ on the interval $[0,\pi/2]$. By Lemma \ref{lemdectest} the function $K'(x)/K(x)$ is non-increasing and by the non-decreasing property of $\rho$, we obtain
\[
	\int^{\pi/2}_{0} K'(x)\rho(x) \mathrm{d}x \leq \int^{\pi/2}_{0}RK'(x) \mathrm{d}x \leq 0,
\]
since $K'$ is negative on $(0,\pi/2)$. We thus obtain $\int^{\pi/2}_{-\pi/2}K'(x)\rho(x) \mathrm{d}x \leq 0 $, violating one of the properties $\rho$ needed to satisfy. Hence $\rho$ cannot exist and the proof of the claim is complete.

\bigskip

Let us now define the auxiliary function $\gamma$:

\begin{equation*} 
 \gamma(x) := \begin{cases}
\beta_{0} + \alpha(x) & \mbox{if } \left|x\right| < \pi/2,\\
\beta_{1}/2 + \alpha(x) & \mbox{if } \pi/2 \leq \left|x\right| < \pi,\\
\beta_{2} + \alpha(x) & \mbox{if } \pi \leq \left|x\right| \leq 3\pi/2,\\
\beta_{1} + \alpha(x) & \mbox{in the other cases when } \alpha(x) \geq 0,\\
-\beta_{1} + \alpha(x) & \mbox{in the other cases when } \alpha(x) < 0,
\end{cases}
\end{equation*}
where $\beta_{2}$ is chosen in such a way that 
\begin{equation}\label{FR-defbeta2}
\int^{\pi/2}_{-\pi/2} \gamma(x+\pi)K(x)\mathrm{d}x = -\int^{\pi/2}_{-\pi/2} (\beta_{1}+ \alpha(x))K(x)\mathrm{d}x.
\end{equation}
(Note that $\beta_{2}=-5\beta_1/2 < - \beta_{1}$.) 

 We intend to show that
 \begin{equation} 
\label{eqintegralinequality}
\int_{-\infty}^{\infty} \tau(x+h)K(x)\mathrm{d}x \geq \int_{-\infty}^{\infty} \gamma(x)K(x)\mathrm{d}x > \varepsilon.
\end{equation}
This would conclude the proof as (\ref{eqconvolutionrelation}) is violated and hence $\limsup_{x\to\infty} |\tau(x)| \leq \pi/2$ must hold.

We first prove that $\int^{\infty}_{-\infty} \gamma(x)K(x)\mathrm{d}x > \varepsilon$. Let 
\[ 
 \tilde{\gamma}(x) := \begin{cases}
\beta_{1} + \alpha(x) & \mbox{if } \alpha(x) \geq 0,\\
-\beta_{1} + \alpha(x) & \mbox{if } \alpha(x) < 0.
\end{cases}
\]
It is clear that $\int^{\infty}_{-\infty} \tilde{\gamma}(x) K(x)\mathrm{d}x = 0$ due to Lemma \ref{lemtestexample} and (\ref{eqtestintegral}). A small computation gives
\begin{align*}
\int^{3\pi/2}_{\pi/2} (\gamma(x) - \tilde{\gamma}(x))K(x)\mathrm{d}x&= 48\pi\beta_1\int_{0}^{\pi/2} \frac{x\cos x}{(\pi^2-4x^{2})(9\pi^2-4x^2)}\:\mathrm{d}x
\\
&>b= 48\pi\eta\int_{0}^{\pi/2} \frac{x\cos x}{(\pi^2-4x^{2})(9\pi^2-4x^2)}\:\mathrm{d}x,
\end{align*}
where we have used \eqref{FR-lowerboundbeta1}. All involved functions are even, so $\int_{-3\pi/2}^{-\pi/2} (\gamma(x) - \tilde{\gamma}(x))K(x)\mathrm{d}x>b$. 
The only other contribution for $\int^{\infty}_{-\infty} (\gamma(x) - \tilde{\gamma}(x))K(x)\mathrm{d}x$ comes from the interval $[-\pi/2,\pi/2]$ and it is precisely $(\beta_0-\beta_1)\int_{-\pi/2}^{\pi/2}K(x)\mathrm{d}x>-\varepsilon$.  We obtain $\int_{-\infty}^{\infty}\gamma(x)K(x)\mathrm{d}x> 2b-\varepsilon,$
 which gives the second inequality in (\ref{eqintegralinequality}) if we choose $\varepsilon<b$, as we may do.

The proof will be complete if we show the first inequality of (\ref{eqintegralinequality}).
It is clear that the inequality 
 \begin{equation} 
\label{eqintegralinequalityI}
\int_{J} \tau(x+h)K(x)\mathrm{d}x \geq \int_{J} \gamma(x)K(x)\mathrm{d}x
\end{equation}
 holds (as an equality, cf. \eqref{eqdefbeta0}) if we restrict the domain of integration to $J=[-\pi/2,\pi/2]$. We will extend the domain of the integration in \eqref{eqintegralinequalityI} to $J=[-\pi/2, N\pi/2]$ for all $N$. The arguments we will give will be symmetrical (see also Claim \ref{claim}) and it can be readily seen that they work to get the inequality \eqref{eqintegralinequalityI} on all intervals of the form $J=[-N\pi/2,N\pi/2]$. Thus, since $N$ can be chosen arbitrarily large, it then suffices to prove \eqref{eqintegralinequalityI} if the intervals of integration are $J=[-\pi/2, N\pi/2]$. 

By Claim \ref{claim}, we have that 
\begin{equation}
\label{FReqchoiceepsilon}
\tau(h + \pi/2) > \beta_{0} - \varepsilon > \beta_{1}/2
\end{equation}
 if $\varepsilon$ is small enough. In fact, using (\ref{FR-lowerboundbeta1}), the choice $\varepsilon<\eta/(2c+2)$ suffices. By the Lipschitz condition \eqref{Lip1eq} and \eqref{FReqchoiceepsilon}, we obtain that $\tau(x+h)>\gamma(x)$ on the interval $[\pi/2,\pi]$, and, combining this with the fact that $K$ is positive on this interval, we see that \eqref{eqintegralinequalityI} also holds on $J=[\pi/2,\pi]$ and hence on the interval $J=[-\pi/2,\pi]$.

For the next interval we apply Lemma \ref{lemintegralfar} with $I=-\int^{\pi/2}_{-\pi/2} (\beta_{1} + \alpha(x)) K(x) \mathrm{d}x$ and $s=\tau(h+\pi/2)$. Notice that $I\leq \int_{-\pi/2}^{\pi/2}\tau(x+h+\pi)\mathrm{d}x$, due to (\ref{eqdefbeta1}). (It could still happen that $s$ is so large that the hypothesis for the lower bound (\ref{eqconditionIs}) for $I$ in Lemma  \ref{lemintegralfar} is not fulfilled. If this is the case we pick for $z$ the lower pointed zig-zag function with $z(\pi/2) = s$ and slope $-1$ on the interval $[\pi/2,3\pi/2]$. The proof then goes along similar lines with only mild adjustments.) We obtain a lower pointed zig-zag function $z(x)$ on $[\pi/2,3\pi/2]$ with starting point 
$z(\pi/2) \geq \tau(h+\pi/2)$ such that $\int^{\pi/2}_{-\pi/2} z(x+\pi) K(x) \mathrm{d}x = -\int^{\pi/2}_{-\pi/2} (\beta_{1} + \alpha(x)) K(x) \mathrm{d}x$ and
\[
\int^{3\pi/2}_{\pi/2} \tau(x+h)K(x)\mathrm{d}x \geq \int^{3\pi/2}_{\pi/2} z(x)K(x)\mathrm{d}x.
\]
Taking into account (\ref{FReqchoiceepsilon}) and (\ref{FR-defbeta2}), we have $z(\pi/2)>\gamma(\pi/2)$ and $\int^{\pi/2}_{-\pi/2} z(x+\pi)K(x) \mathrm{d}x=\int^{\pi/2}_{-\pi/2} \gamma(x+\pi)K(x) \mathrm{d}x$. 
We can then use Lemma \ref{lemmonotone} to compare the functions $z(x+\pi)$ and $\gamma(x+\pi)$ with respect to the (by Lemma \ref{lemtranslatestest})  non-increasing function $K(x+\pi)/K(x)$ and the positive measure $K(x)\mathrm{d} x$ on the interval $[-\pi/2,\pi/2]$. This yields the inequality
$$
\int^{3\pi/2}_{\pi/2} z(x)K(x)\mathrm{d}x \geq \int^{3\pi/2}_{\pi/2} \gamma(x)K(x)\mathrm{d}x,
$$
establishing (\ref{eqintegralinequalityI}) on $[\pi/2,3\pi/2]$ and thus also on $[-\pi/2,3\pi/2]$.

Let us now show (\ref{eqintegralinequalityI}) for the remaining intervals. We proceed by induction. Suppose we have already shown (\ref{eqintegralinequalityI}) for the intervals $[-\pi/2, L'\pi/2]$, where $L'$ is a positive integer and $L'<L$. Suppose w.l.o.g. that $K(x)$ is negative on $((L-1)\pi/2,L\pi/2)$. (The other case can be treated analogously.) 

First let $L$ be even and set $s:=\tau(h+(L-1)\pi/2) $. If $s \leq \beta_{1}$, the induction hypothesis on $[-\pi/2,(L-1)\pi/2]$, the Lipschitz condition \eqref{Lip1eq}, and the fact that $K$ is negative on $((L-1)\pi/2,L\pi/2)$ imply (\ref{eqintegralinequalityI}) on $J=[-\pi/2, L\pi/2]$. If $s > \beta_{1}$, (\ref{eqintegralinequalityI}) on $[-\pi/2, L\pi/2]$ follows from the Lipschitz condition \eqref{Lip1eq}, the induction hypothesis on $[-\pi/2,(L-2)\pi/2]$, and the fact that $\int^{L\pi/2}_{(L-2)\pi/2}K(x)\mathrm{d}x$ is positive (since $1/(4x^{2}-\pi^{2})$ is decreasing and $K$ is non-negative on $[(L-2)\pi/2, (L-1)\pi/2]$). Indeed,
\begin{align*}
\int^{L\pi/2}_{-\pi/2} \tau(x+h) K(x) \mathrm{d}x & \geq \int^{(L-2)\pi/2}_{-\pi/2} \gamma(x) K(x) \mathrm{d}x + \int^{L\pi/2}_{(L-2)\pi/2} (s + x - (L-1)\pi/2)K(x) \mathrm{d}x\\
& \geq \int^{L\pi/2}_{-\pi/2} \gamma(x) K(x) \mathrm{d}x + (s-\beta_{1})\int^{L\pi/2}_{(L-2)\pi/2}K(x)\mathrm{d}x \\
& \geq \int^{L\pi/2}_{-\pi/2} \gamma(x) K(x) \mathrm{d}x.
\end{align*}

Finally let $L$ be odd. We now set $s:=\tau(h+(L-2)\pi/2)$. We treat the subcase $s < \beta_{1}$ first. We claim that the (\ref{eqintegralinequalityI}) holds on the interval $J=[(L-2)\pi/2,L\pi/2]$. By Lemma \ref{lemintegralfar} (If the upper bound (\ref{eqconditionIs}) is not satisfied, we pick $z$ such that $z(-\pi/2) = s$ and has slope $1$ on $[-\pi/2,\pi/2]$. The proof then only changes mildly.), there is an upper pointed zig-zag function $z(x)$ on the interval $[-\pi/2,\pi/2]$  with $z(-\pi/2)<\beta_1$, 
\begin{equation}
\label{FRcondzigzagintegraleq}
\int^{\pi/2}_{-\pi/2} z(x)K(x) \mathrm{d}x  = \int^{\pi/2}_{-\pi/2} (\beta_{1}+\alpha(x))K(x) \mathrm{d}x ,
\end{equation}
 and
\begin{equation}\label{eqintegralzigzag}
 \int^{L\pi/2}_{(L-2)\pi/2} \tau(x+h) K(x) \mathrm{d}x \geq \int^{\pi/2}_{-\pi/2} z(x) K(x+(L-1)\pi/2) \mathrm{d}x.
\end{equation}
This can be further estimated by 
\begin{equation} \label{eqzigzagjump}
 \int^{\pi/2}_{-\pi/2} z(x) K(x+(L-1)\pi/2) \mathrm{d}x \geq \int^{\pi/2}_{-\pi/2} j(x) K(x+(L-1)\pi/2) \mathrm{d}x,
\end{equation}
where $j$ is the jump function 
\begin{equation}\label{FR-jumpfunction-def}
 j(x) := \begin{cases}
 z(-\pi/2) + (x+\pi/2) & \text{if } x \leq 0\\
 2\beta_{1} -z(-\pi/2)  + \pi/2 -x  & \text{if } x > 0.
\end{cases}
\end{equation} 
Indeed, notice that $\int^{\pi/2}_{-\pi/2} j(x) K(x) \mathrm{d}x = \int^{\pi/2}_{-\pi/2} (\beta_{1}+\alpha(x))K(x) \mathrm{d}x$ and $j(x) = z(x)$ on the interval $[-\pi/2,0]$, because the zig-zag function $z$ attains its peak value on $(0,\pi/2]$ (otherwise \eqref{FRcondzigzagintegraleq} could not be realized due to $z(-\pi/2)<\beta_1$). Hence (\ref{eqzigzagjump}) follows by applying Lemma \ref{lemmonotone} with respect to the (by Lemma \ref{lemtranslatestest}) non-decreasing function $K(x+(L-1)\pi/2)/K(x)$ and the positive measure $K(x)\mathrm{d}x$ on the interval $[0,\pi/2]$. Moreover,
\begin{equation} \label{eqzigzagjump2}
\int^{\pi/2}_{-\pi/2} j(x) K(x+(L-1)\pi/2) \mathrm{d}x \geq \int^{\pi/2}_{-\pi/2} (\beta_{1} + \alpha(x))K(x+(L-1)\pi/2) \mathrm{d}x,
\end{equation}
as follows from $K(-x+(L-1)\pi/2) \leq K(x+(L-1)\pi/2)$ for $0 \leq x \leq \pi/2$. Hence (\ref{eqintegralinequalityI}) holds on $[(L-2)\pi/2,L\pi/2]$ and thus, by applying the induction hypothesis on $[-\pi/2,(L-2)\pi/2 ]$, also on $[-\pi/2, L\pi/2]$.

Now let $s \geq \beta_{1}$. 
By Lemma \ref{lemintegralfar} there is an upper pointed zig-zag function $z(x)$ on $[-\pi/2,\pi/2]$ such that 
\eqref{FRcondzigzagintegraleq}, \eqref{eqintegralzigzag}, and $z(-\pi/2)\leq s$ hold. Notice that the lower bound \eqref{eqconditionIs} has to be satisfied; otherwise the Lipschitz condition \eqref{Lip1eq} and \eqref{eqdefbeta1} would force a contradiction. If $z(-\pi/2)<\beta_1$, we can proceed exactly in the same way as in the previous subcase via the auxiliary jump function \eqref{FR-jumpfunction-def} and show that \eqref{eqzigzagjump} and \eqref{eqzigzagjump2} hold (all we needed there was $z(-\pi/2)<\beta_1$); thus, leading again to (\ref{eqintegralinequalityI}) on $[-\pi/2, L\pi/2]$.
Suppose then that $\beta_{1} \leq z(-\pi/2) \leq s$. Notice that the integral equality \eqref{FRcondzigzagintegraleq}, together with $\beta_{1} \leq z(-\pi/2)$, implies that the peak of $z$ must necessarily occur at some point of the interval $[-\pi/2,0]$; therefore, $z(x-(L-1)\pi/2)) \leq \gamma(x)$ on $[(L-1)\pi/2,L\pi/2]$. We obtain
\begin{align*}
 \int^{L\pi/2}_{-\pi/2}\tau(x+h)K(x)\mathrm{d}x & \geq \int^{(L-2)\pi/2}_{-\pi/2} \tau(x+h)K(x)\mathrm{d}x + \int^{L\pi/2}_{(L-2)\pi/2} z(x-(L-1)\pi/2) K(x)\mathrm{d}x \\
& \geq \int^{(L-3)\pi/2}_{-\pi/2} \gamma(x)K(x)\mathrm{d}x + \int^{(L-2)\pi/2}_{(L-3)\pi/2} (s + (x-(L-2)\pi/2)) K(x)\mathrm{d}x \\
& \quad + \int^{(L-1)\pi/2}_{(L-2)\pi/2} (s + (x-(L-2)\pi/2)) K(x)\mathrm{d}x + \int^{L\pi/2}_{(L-1)\pi/2} \gamma(x)K(x)\mathrm{d}x\\
& \geq \int^{L\pi/2}_{-\pi/2}\gamma(x)K(x)\mathrm{d}x + (s-\beta_{1}) \int^{\pi/2}_{-\pi/2}K(x+(L-2)\pi/2)\mathrm{d}x\\
& \geq \int^{L\pi/2}_{-\pi/2}\gamma(x)K(x)\mathrm{d}x,
\end{align*}
where we have used the induction hypothesis on $[-\pi/2,(L-3)\pi/2]$, the inequality (\ref{eqintegralzigzag}),  the Lipschitz condition \eqref{Lip1eq}, the fact that $K$ is non-negative on $((L-3)\pi/2,(L-2)\pi/2)$, and $\int^{\pi/2}_{-\pi/2}K(x+(L-2)\pi/2)\mathrm{d}x > 0$ (since $1/(4x^{2}-\pi^2)$ is decreasing).

We have shown (\ref{eqintegralinequalityI}) on all required intervals and therefore the proof of (\ref{F-Rproofeq1bis}) is complete.
{
\subsection{Vector-valued functions} It turns out that Theorem \ref{finiteformF-Rth3}  (and hence also Theorem \ref{finiteformF-Rth2}) remains valid for functions with values in a Banach space. As our proof for the scalar-valued version cannot be directly generalized, we discuss here a simple approach to treat the vector-valued case.  We first need a definition. 
\begin{definition} A family $\{G_{\nu}\}_{\nu\in \mathcal{I}}$ of analytic functions on the half-plane $\Re e\:s>0$ is said to have uniform local pseudofunction boundary behavior on the boundary open subset $iU$ if each $G_{\nu}$ has local distributional boundary values there and the boundary distributions
$$
\lim_{\sigma\to 0^{+}}G_{\nu}(\sigma+it)=g_{\nu}(t) \quad (\mbox{in  } \mathcal{D}'(U))
$$
satisfy
$$
\lim_{|x|\to\infty} \widehat{\varphi g_{\nu}}(x)=0 \qquad \mbox{uniformly for } \nu\in \mathcal{I},
$$
for each (fixed) $\varphi\in \mathcal{D}(U)$.

\end{definition}

\smallskip
Our method from Subsection \ref{subsection F-Rproof} also yields the ensuing uniform result.

\begin{lemma}
\label{FRlemma vector-valued}Let $
\{\tau_{\nu}
\}_{\nu\in \mathcal{I}}$ be a family of functions such that $\tau_{\nu}\in\operatorname*{Lip}([0,\infty); M)$ for every $\nu\in \mathcal{I}$. If there is $\lambda>0$ such that the family of Laplace transforms $\mathcal{L}\{\tau_{\nu};s\}$ has uniform local pseudofunction boundary behavior on $(-i\lambda, i\lambda)$, then 
\[
\limsup_{x\to\infty} \sup_{\nu\in \mathcal{I}}|\tau_{\nu}(x)| \leq \frac{M \pi}{2\lambda}.
\]
\end{lemma}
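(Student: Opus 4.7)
My plan is to reduce the uniform result to the scalar theorem by means of a compactness argument. After scaling so that $M = \lambda = 1$, the uniform local pseudofunction boundary behavior of the family $\{\mathcal{L}\{\tau_\nu; s\}\}_{\nu \in \mathcal{I}}$, combined with the multiplication identity $(\tau \ast \phi)^{\wedge} = \hat\tau \cdot \hat\phi$ and the closure argument from Subsection~\ref{subsection F-Rproof}, yields the uniform convolution relation
\[
\lim_{h \to \infty} \sup_{\nu \in \mathcal{I}} \left| \int_{-\infty}^{\infty} \tau_\nu(x + h) K(x)\,\mathrm{d}x \right| = 0,
\]
and analogously for every $\phi \in L^1(\mathbb{R})$ with $\operatorname{supp} \hat\phi \subseteq [-1, 1]$. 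Testing against a nonnegative $\phi \in \mathcal{S}(\mathbb{R})$ of integral $1$ whose Fourier transform is compactly supported in $(-1,1)$, and invoking the common Lipschitz constant exactly as in the scalar case, produces a uniform bound $\sup_\nu \|\tau_\nu\|_{L^\infty(\mathbb{R})} \leq C$.

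Suppose for contradiction that $\limsup_{x \to \infty} \sup_{\nu} |\tau_\nu(x)| > \pi/2 + \eta$ for some $\eta > 0$. I pick sequences $x_n \to \infty$ and $\nu_n \in \mathcal{I}$ with $\tau_{\nu_n}(x_n) > \pi/2 + \eta$ (the sign being arranged after extraction). The translates $\sigma_n(x) := \tau_{\nu_n}(x + x_n)$ form a uniformly Lipschitz-$1$ and uniformly bounded family on $[-x_n, \infty)$; I extend each $\sigma_n$ Lipschitz-$1$ to all of $\mathbb{R}$ (e.g.\ constant on $(-\infty, -x_n)$). Arzel\`a--Ascoli together with a diagonal argument then delivers a subsequence converging uniformly on compact subsets of $\mathbb{R}$ to a function $\sigma \in \operatorname{Lip}(\mathbb{R}; 1) \cap L^\infty(\mathbb{R})$ satisfying $\sigma(0) \geq \pi/2 + \eta$.

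Next I would verify the pointwise identity $(\sigma \ast K)(y) = 0$ for every $y \in \mathbb{R}$. Writing
\[
(\sigma_n \ast K)(y) = (\tau_{\nu_n} \ast K)(y + x_n) + r_n(y),
\]
the remainder $r_n(y)$ coming from the artificial extension on $(-\infty, -x_n)$ is bounded by $2C\,\|K\|_{L^1((-\infty, -x_n - y])}$, which tends to $0$ since $K \in L^1(\mathbb{R})$. The principal term tends to $0$ by the uniform relation of the first paragraph, while $(\sigma_n \ast K)(y) \to (\sigma \ast K)(y)$ by uniform convergence on compacta combined with the $L^1$-decay of $K$.

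It remains to contradict $\sigma(0) \geq \pi/2 + \eta$ via the scalar argument of Subsection~\ref{subsection F-Rproof} applied to $\sigma$. Because $(\sigma \ast K) \equiv 0$ holds pointwise — strictly stronger than the asymptotic vanishing used there — the preliminary modification of $\tau$ on a finite interval (which served only to bring $\sup |f|$ close to $\limsup |f|$) becomes unnecessary; one may take $h$ at any near-supremum of $f_\sigma(y) := \int_{-\pi/2}^{\pi/2} \sigma(x+y) K(x)\,\mathrm{d}x$ with $f_\sigma'(h)$ small, and such $h$ exists since $f_\sigma$ is bounded and
\[
f_\sigma(0) \geq \int_{-\pi/2}^{\pi/2} \bigl(\pi/2 + \eta - |x|\bigr) K(x)\,\mathrm{d}x > 0
\]
by the Lipschitz lower bound on $\sigma$. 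The very same $\gamma$-construction of Subsection~\ref{subsection F-Rproof} then produces
\[
\int_{-\infty}^{\infty} \sigma(x + h) K(x)\,\mathrm{d}x \geq \int_{-\infty}^{\infty} \gamma(x) K(x)\,\mathrm{d}x > 0,
\]
contradicting $(\sigma \ast K)(h) = 0$. The main obstacle I anticipate is checking that the scalar argument, originally formulated for $\tau$ supported in $[0, \infty)$ and targeting $\limsup_{x \to \infty} |\tau(x)|$, transfers to a function defined on all of $\mathbb{R}$ with a purely pointwise conclusion at $y = 0$; the pointwise vanishing of $\sigma \ast K$ is exactly what compensates for both differences.
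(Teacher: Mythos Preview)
Your approach is correct and genuinely different from the paper's. The paper does not give a separate argument for the lemma; it simply asserts that the method of Subsection~\ref{subsection F-Rproof} yields the uniform result, meaning one repeats the scalar proof verbatim while choosing the point $h$ \emph{and} the index $\nu$ simultaneously, replacing $\sup_{y}|f(y)|$ by $\sup_{y,\nu}|f_{\nu}(y)|$ in the definition of $\beta_{1}$, and checking that every estimate (the convolution relation \eqref{convolutioneqK}, the boundedness, the choice of $h$ satisfying \eqref{eqconvolutionrelation}--\eqref{eqmaxcond2}) holds uniformly in $\nu$. You instead pass to a subsequential limit via Arzel\`a--Ascoli, obtaining a single bounded function $\sigma\in\operatorname{Lip}(\mathbb{R};1)$ with $\sigma\ast K\equiv 0$ everywhere and $\sigma(0)\geq\pi/2+\eta$, and then rerun the core of the scalar argument against~$\sigma$. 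The payoff of your route is that it cleanly isolates the new content (the compactness step) from the already-proved scalar machinery; the cost is that the scalar argument must be adapted from an asymptotic conclusion $\limsup_{x\to\infty}|\tau(x)|\leq\pi/2$ to a pointwise one $|\sigma(0)|\leq\pi/2$, which you correctly identify as the crux and which is indeed handled by the exact (rather than $o(1)$) vanishing of $\sigma\ast K$: condition~\eqref{eqconvolutionrelation} then holds for \emph{every} $h$, so the finite-interval modification enforcing~\eqref{F-Rsup-limsupcond} is no longer needed, and a near-supremum $h$ of $f_{\sigma}$ with $|f_{\sigma}'(h)|$ small exists by elementary calculus since $f_{\sigma}$ is bounded and $C^{1}$. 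One small point worth tightening: because the limit gives only $\sigma(0)\geq\pi/2+\eta$ (weak inequality), you should start from $|\tau_{\nu_{n}}(x_{n})|>\pi/2+\eta'$ with some $\eta'>\eta$ so that the strict lower bound $\beta_{1}>\eta$ in~\eqref{FR-lowerboundbeta1} survives the passage to the limit.
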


The notion of local pseudofunction boundary behavior immediately extends to analytic functions with values in Banach spaces. We then have,

\begin{theorem}
\label{FRtheorem vector-valued}
Let $E$ be a Banach space and let $\boldsymbol{\tau}:[0,\infty)\to E$ be locally (Bochner) integrable such that, for some sufficiently large  $X>0$,
\begin{equation}
\label{Lip cond eq vector}
\|\boldsymbol{\tau}(x)-\boldsymbol{\tau}(y)\|_{E}\leq M|x-y|, \qquad \mbox{for all } x,y\geq X. 
\end{equation}
If the Laplace transform $\mathcal{L}\{\boldsymbol{\tau};s\}$ has local pseudofunction boundary behavior on $(-i\lambda, i\lambda)$ for some $\lambda>0$, then 
\[
\limsup_{x\to\infty} \|\boldsymbol{\tau}(x)\|_{E} \leq \frac{M \pi}{2\lambda}.
\]
\end{theorem}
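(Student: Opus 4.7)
The plan is to reduce the vector-valued statement to the uniform scalar-valued Lemma~\ref{FRlemma vector-valued} via a Hahn--Banach scalarization. As a first step, I would modify $\boldsymbol{\tau}$ on $[0,X]$ so that (\ref{Lip cond eq vector}) holds globally on $[0,\infty)$; this only adds to $\mathcal{L}\{\boldsymbol{\tau};s\}$ the Laplace transform of a compactly supported Bochner integrable function, i.e.\ an entire $E$-valued function, and therefore preserves the local pseudofunction boundary behavior on $(-i\lambda,i\lambda)$. Hence I may assume $\boldsymbol{\tau}\in\operatorname*{Lip}([0,\infty);M)$.

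For each $\nu$ in the closed unit ball $B_{E^{*}}$ of $E^{*}$, set $\tau_{\nu}(x):=\langle \nu,\boldsymbol{\tau}(x)\rangle$. Then $\tau_{\nu}\in\operatorname*{Lip}([0,\infty);M)$ uniformly in $\nu$, and $\mathcal{L}\{\tau_{\nu};s\}=\langle\nu,\mathcal{L}\{\boldsymbol{\tau};s\}\rangle$. Let $g$ denote the $E$-valued boundary distribution of $\mathcal{L}\{\boldsymbol{\tau};s\}$ on $(-\lambda,\lambda)$; the natural vector-valued reading of local pseudofunction boundary behavior is that $\|\widehat{\varphi g}(x)\|_{E}\to 0$ as $|x|\to\infty$ for every $\varphi\in\mathcal{D}(-\lambda,\lambda)$. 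The scalar boundary distribution of $\mathcal{L}\{\tau_{\nu};s\}$ is then $\langle\nu,g\rangle$, and from the pointwise identity $\widehat{\varphi\langle\nu,g\rangle}(x)=\langle\nu,\widehat{\varphi g}(x)\rangle$ one obtains
\[
\bigl|\widehat{\varphi\langle\nu,g\rangle}(x)\bigr|\leq \|\widehat{\varphi g}(x)\|_{E}\longrightarrow 0 \quad (|x|\to\infty),
\]
the convergence being uniform for $\nu\in B_{E^{*}}$. Thus the family $\{\mathcal{L}\{\tau_{\nu};s\}\}_{\nu\in B_{E^{*}}}$ satisfies the uniform local pseudofunction hypothesis of Lemma~\ref{FRlemma vector-valued}.

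Applying that lemma yields
\[
\limsup_{x\to\infty}\sup_{\nu\in B_{E^{*}}}|\langle\nu,\boldsymbol{\tau}(x)\rangle|\leq \frac{M\pi}{2\lambda},
\]
and the Hahn--Banach theorem identifies the inner supremum with $\|\boldsymbol{\tau}(x)\|_{E}$, completing the proof. I expect the only delicate point to be the correct vector-valued interpretation of local pseudofunction boundary behavior---specifically that the Fourier coefficients $\widehat{\varphi g}(x)$ decay to zero in the $E$-norm, rather than only in some weak sense. Without norm convergence the scalarization above cannot be made uniform over $B_{E^{*}}$, so this is the conceptual heart of the reduction; once granted, the rest of the argument is automatic.
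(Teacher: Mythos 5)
Your proposal is correct and follows essentially the same route as the paper: scalarize via the unit ball of the dual, observe that the (norm-decay) vector-valued pseudofunction hypothesis gives the required uniformity for the family $\{\tau_{e^{\ast}}\}$, apply Lemma \ref{FRlemma vector-valued}, and conclude with Hahn--Banach. The only difference is that you spell out the uniformity of the scalarized boundary behavior explicitly, which the paper leaves implicit.
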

\begin{proof} We may assume that \eqref{Lip cond eq vector} holds for all $x,y\in [0,\infty)$. Denote as $E'$ the dual space of $E$.  Applying Lemma \ref{FRlemma vector-valued} to the family of functions 
$$
\tau_{e^{\ast}}(x)=\langle e^{\ast}, \boldsymbol{\tau}(x)\rangle,$$
indexed by $e^{*}$ in the unit ball $B$ of $E'$, we obtain from the Hahn-Banach theorem
$$
\limsup_{x\to\infty} \|\boldsymbol{\tau}(x)\|_{E} = \limsup_{x\to\infty} \sup _{e^{\ast}\in B}|\tau_{e^{\ast}}(x)|\leq \frac{M \pi}{2\lambda}.
$$ 
\end{proof}

\section{Some generalizations} \label{section generalizations and corollaries F-R}
We now discuss some generalizations and consequences of Theorem \ref{finiteformF-Rth3}. We start with a general inequality for functions whose Laplace transforms have local pseudofunction behavior on a given symmetric segment of the imaginary axis.

Given a function $\tau$ and a number $\delta>0$, define its oscillation modulus (at infinity) as the non-decreasing function
\[
\Psi(\delta):=\Psi(\tau,\delta)= \limsup_{x\to\infty}\sup_{h\in [0,\delta]}|\tau(x+h)-\tau(x)|.
\]
The oscillation modulus is involved in the definition of many familiar and important Tauberian conditions. For example, the function $\tau$ is boundedly oscillating precisely when $\Psi$ is finite for some $\delta$, while it is slowly oscillating if $\Psi(0^{+})=\lim_{\delta\to0^{+}}\Psi(\delta)=0$. Since $\Psi$ is subadditive, it is finite everywhere whenever $\tau$ is boundedly oscillating. We shall call a function R-slowly oscillating (regularly slowly oscillating) if $\limsup_{\delta\to0^{+}}\Psi(\delta)/\delta<\infty$. Since $\Psi$ is subadditive, it is easy to see the latter implies that $\Psi$ is right differentiable at $\delta=0$ and indeed 
$$
\Psi'(0^{+})=\sup_{\delta>0}\frac{\Psi(\delta)}{\delta}.
$$
Observe that if $\tau\in\operatorname*{Lip}([X,\infty), M)$, then $\Psi'(0^{+})\leq M$.

\begin{theorem}
\label{finiteformF-Rth4} Let $\tau\in L^{1}_{loc}[0,\infty)$ be such that 
\begin{equation}
\label{FR-eqLapTconvibis}
 \mathcal{L}\{\tau;s\} = \int^{\infty}_{0} \tau(x) e^{-sx} \mathrm{d}x \text{ converges for } \Re e \: s > 0
\end{equation}
and admits local pseudofunction boundary behavior on the segment $(-i\lambda, i\lambda)$. Then,
\begin{equation}
\label{F-Rgceq1}
\limsup_{x\to\infty}|\tau(x)|\leq \inf_{\delta>0} \left(1+\ \frac{\pi}{2\delta\lambda} \right) \Psi(\delta).
\end{equation}
Furthermore, if $\tau$ is R-slowly oscillating, then
\begin{equation}
\label{F-Rgceq2}
\limsup_{x\to\infty}|\tau(x)|\leq \frac{\pi \Psi'(0^{+})}{2\lambda}.
\end{equation}
\end{theorem}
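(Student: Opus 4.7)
The plan is to reduce Theorem \ref{finiteformF-Rth4} to Theorem \ref{finiteformF-Rth3} via a smoothing trick. For $\delta>0$, define
\[
\tau_\delta(x):=\frac{1}{\delta}\int_x^{x+\delta}\tau(u)\,\mathrm{d}u.
\]
We may assume $\Psi(\delta)<\infty$ (otherwise \eqref{F-Rgceq1} is vacuous), so that $\tau$ has at most linear growth. The function $\tau_\delta$ is absolutely continuous with derivative $(\tau(x+\delta)-\tau(x))/\delta$, so for any $\varepsilon>0$ there exists $X_\varepsilon$ such that $\tau_\delta\in\operatorname{Lip}([X_\varepsilon,\infty);\:\Psi(\delta)/\delta+\varepsilon)$. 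Moreover, $|\tau(x)-\tau_\delta(x)|\leq\Psi(\delta)+\varepsilon$ for all $x$ large enough.

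The key step is to check that $\mathcal{L}\{\tau_\delta;s\}$ still admits local pseudofunction boundary behavior on $(-i\lambda,i\lambda)$. The cleanest route is through the equivalent convolution characterization \eqref{convformlpbvb}. Extending $\tau$ by zero to $(-\infty,0)$, we have $\tau_\delta=\tau\ast\phi_\delta$ with $\phi_\delta=\delta^{-1}\chi_{(-\delta,0)}$. Hence for any $\phi\in\mathcal{S}(\mathbb{R})$ with $\hat\phi\in\mathcal{D}((-\lambda,\lambda))$,
\[
\tau_\delta\ast\phi=\tau\ast(\phi_\delta\ast\phi).
\]
Since $\phi_\delta$ is compactly supported, $\phi_\delta\ast\phi\in\mathcal{S}(\mathbb{R})$ with Fourier transform $\hat\phi_\delta\hat\phi$ still supported in $(-\lambda,\lambda)$. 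The hypothesis on $\tau$ therefore forces $(\tau_\delta\ast\phi)(x)\to 0$, verifying \eqref{convformlpbvb} for $\tau_\delta$. (Equivalently, a direct computation shows $\mathcal{L}\{\tau_\delta;s\}=\frac{e^{s\delta}-1}{\delta s}\,\mathcal{L}\{\tau;s\}+H(s)$ with $H$ entire, and the prefactor is a smooth multiplier on the imaginary segment.)

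Applying Theorem \ref{finiteformF-Rth3} to $\tau_\delta$ yields
\[
\limsup_{x\to\infty}|\tau_\delta(x)|\leq \frac{\pi}{2\lambda}\bigl(\Psi(\delta)/\delta+\varepsilon\bigr),
\]
and combining this with the triangle inequality $|\tau|\leq|\tau_\delta|+|\tau-\tau_\delta|$ and letting $\varepsilon\to 0$ gives
\[
\limsup_{x\to\infty}|\tau(x)|\leq \Bigl(1+\frac{\pi}{2\delta\lambda}\Bigr)\Psi(\delta).
\]
Taking the infimum over $\delta>0$ establishes \eqref{F-Rgceq1}. For \eqref{F-Rgceq2}, R-slow oscillation gives $\Psi(\delta)=O(\delta)$ as $\delta\to 0^+$, hence $\Psi(\delta)\to 0$; combined with $\lim_{\delta\to 0^+}\Psi(\delta)/\delta=\Psi'(0^+)$ (which was already noted from subadditivity), passing to the limit $\delta\to 0^+$ in \eqref{F-Rgceq1} yields the desired bound. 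The main conceptual hurdle is the verification that the mollification preserves local pseudofunction boundary behavior, but this is handled transparently by the convolutional characterization recalled in Section \ref{section preli}; the rest of the argument is bookkeeping around oscillation moduli.
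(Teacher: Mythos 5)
Your proof is correct and follows essentially the same route as the paper: average $\tau$ over $[x,x+\delta]$ to get a Lipschitz function with constant about $\Psi(\delta)/\delta$, check that the averaging preserves local pseudofunction boundary behavior, apply Theorem \ref{finiteformF-Rth3}, and conclude via the triangle inequality and the limit $\delta\to 0^{+}$ for the R-slowly oscillating case. The only (immaterial) difference is that you verify the pseudofunction preservation through the convolution characterization \eqref{convformlpbvb} (legitimate here since $\Psi(\delta)<\infty$ gives $\tau$ linear growth, hence temperedness), whereas the paper uses the smooth-multiplier identity $\mathcal{L}\{\tau_{\delta};s\}=\frac{e^{\delta s}-1}{\delta s}\mathcal{L}\{\tau;s\}+\text{entire}$, which you also note parenthetically.
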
 
\begin{proof} We can of course assume that $\tau$ is boundedly oscillating; otherwise the right side of \eqref{F-Rgceq1} is identically infinity and the inequality trivially holds. We follow an idea of Ingham \cite{ingham1935} and reduce our problem to an application of Theorem \ref{finiteformF-Rth3}. Fix $\delta$ and let $M>\Psi(\delta)$ be arbitrary but also fixed. There is $X>0$ such that
$$
|\tau(x)-\tau(y)|< M, \quad \mbox{for all } x\geq X \mbox{ and } x\leq y \leq x+\delta.
$$
Define 
\begin{equation}
\label{FRtaudelta}
\tau_{\delta}(x)=\frac{1}{\delta}\int_{x}^{x+\delta} \tau(u)\mathrm{d}u. 
\end{equation}
for $x\geq X$ and $\tau_{\delta}(x)=0$ otherwise. Then, 
$$
|\tau_{\delta}(x+h)-\tau_{\delta}(x)|\leq \frac{1}{\delta}\int_{x}^{x+h}|\tau(u+\delta)-\tau(u))|\mathrm{d}u\leq \frac{M}{\delta} h, \quad x\geq X, \: h\geq 0,
$$
that is, $\tau_{\delta}\in \operatorname*{Lip}([X,\infty); M/\delta)$. Its Laplace transform is given by
$$
\mathcal{L}\{\tau_{\delta};s\}= \frac{e^{\delta s}-1}{\delta s}\mathcal{L}\{\tau;s\} + \text{entire function}, \quad \Re e\: s>0,
$$
and hence also has local pseudofunction boundary behavior on $(-i\lambda, i\lambda)$.  Theorem \ref{finiteformF-Rth3} implies that
$$
\limsup_{x\to\infty} \frac{1}{\delta}\left|\int_{x}^{x+\delta} \tau(u)\mathrm{d}u\right|\leq \frac{M\pi}{2\delta\lambda}.
$$
Therefore,
$$
\limsup_{x\to\infty}|\tau(x)|\leq \frac{M\pi}{2\delta\lambda}+\limsup_{x\to\infty} \frac{1}{\delta}\int_{x}^{x+\delta} |\tau(u)-\tau(x)|\mathrm{d}u \leq \left(1+\ \frac{\pi}{2\delta\lambda} \right) M,
$$
whence \eqref{F-Rgceq1} follows. 

Assume now that $\tau$ is R-slowly oscillating. Then,
$$
\lim\sup_{x\to\infty}|\tau(x)|\leq  \lim_{\delta\to0^{+}}\left(1+\ \frac{\pi}{2\delta\lambda} \right) \Psi(\delta)=\frac{\pi \Psi'(0^{+})}{2\lambda}.
$$

\end{proof}

It should be noticed that the inequalities \eqref{F-Rgceq1} and \eqref{F-Rgceq2} are basically sharp in the following sense. If we take as $\tau$ the example from Subsection \ref{F-R subsection example}, one has for this function $\Psi'(0)=1$ and
$$
\frac{\pi}{2}=\limsup_{x\to\infty}|\tau(x)|=\inf_{\delta>0} \left(1+\ \frac{\pi}{2\delta} \right) \Psi(\delta),
$$
which shows that the constant $\pi/2$ in \eqref{F-Rgceq1} and \eqref{F-Rgceq2} is optimal.

The next result improves another Tauberian theorem of Ingham\footnote{Ingham's result is $\limsup_{x\to\infty}|\tau(x)|\leq 2(1+3\theta/\lambda)\Theta$.} (cf. \cite[Thm.~I, p.~464]{ingham1935}). It plays an important role for our treatment of one-sided Tauberian conditions in the following section.

\begin{theorem}\label{finiteformF-Rth5}
Let $\tau$ be of local bounded variation, vanish on $(-\infty,0)$, have convergent Laplace transform \eqref{FR-eqLapTconvibis}, and satisfy the Tauberian condition
\begin{equation}
\label{FR-gceq3}
\limsup_{x\to\infty}e^{-\theta x}\left|\int_{0^{-}}^{x}e^{\theta u}\mathrm{d}\tau(u)\right|=:\Theta<\infty,
\end{equation}
for some $\theta>0$. If $\mathcal{L}\{\tau;s\}$ has local pseudofunction boundary behavior on $(-i\lambda, i\lambda)$, then
\begin{equation}
\label{FR-gceq4}
\limsup_{x\to\infty}\left|\tau(x)\right| \leq \left(1+\frac{\theta\pi}{2\lambda}\right)\Theta.
\end{equation}
\end{theorem}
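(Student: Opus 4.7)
The plan is to split $\tau$ into the sum of a ``bounded part'' directly controlled by $\Theta$ and a Lipschitz continuous function to which the sharp bound of Theorem \ref{finiteformF-Rth3} applies. I introduce
\[
\sigma(x) = e^{-\theta x}\int_{0^{-}}^{x} e^{\theta u}\,\mathrm{d}\tau(u), \qquad x\geq 0,
\]
so that the Tauberian hypothesis \eqref{FR-gceq3} reads $\limsup_{x\to\infty}|\sigma(x)|=\Theta$. Setting $F(x)=e^{\theta x}\sigma(x)=\int_{0^{-}}^{x}e^{\theta u}\mathrm{d}\tau(u)$, the Leibniz rule for Stieltjes integration gives $\mathrm{d}\sigma(x)=-\theta\sigma(x)\mathrm{d}x+e^{-\theta x}\mathrm{d}F(x)=\mathrm{d}\tau(x)-\theta\sigma(x)\mathrm{d}x$, and integrating (using $\tau(0^{-})=\sigma(0^{-})=0$) yields the basic decomposition
\[
\tau(x) = \sigma(x) + T(x), \qquad T(x) := \theta\int_{0}^{x}\sigma(u)\,\mathrm{d}u.
\]
Since $|\sigma(u)|$ is eventually bounded by any prescribed $\Theta'>\Theta$, the function $T$ is Lipschitz on $[X,\infty)$ with constant $\theta\Theta'$ for all sufficiently large $X$.

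The next step is to transfer the local pseudofunction boundary behavior from $\mathcal{L}\{\tau;s\}$ to $\mathcal{L}\{T;s\}$. A Fubini computation on the Laplace integrals (or direct use of the identity $\mathrm{d}\tau=\mathrm{d}\sigma+\theta\sigma\mathrm{d}x$) gives $\mathcal{L}\{\sigma;s\} = s\mathcal{L}\{\tau;s\}/(s+\theta)$, whence
\[
\mathcal{L}\{T;s\} = \frac{\theta}{s}\mathcal{L}\{\sigma;s\} = \frac{\theta}{s+\theta}\,\mathcal{L}\{\tau;s\}.
\]
The multiplier $\theta/(s+\theta)$ is holomorphic on an open neighborhood of the imaginary axis (its only pole lies at $s=-\theta<0$), so its restriction to $i\mathbb{R}$ is smooth. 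Since smooth functions act as multipliers on local pseudofunctions (cf.\ Section \ref{section preli}), the distributional boundary value of $\mathcal{L}\{T;s\}$ on $(-i\lambda, i\lambda)$ is again a local pseudofunction.

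Applying Theorem \ref{finiteformF-Rth3} to $T$ (with Lipschitz constant $\theta\Theta'$ and the given $\lambda$) then gives $\limsup_{x\to\infty}|T(x)|\leq \theta\Theta'\pi/(2\lambda)$; letting $\Theta'\searrow\Theta$ and combining with $\limsup_{x\to\infty}|\sigma(x)|=\Theta$ via the triangle inequality $|\tau(x)|\leq |\sigma(x)|+|T(x)|$ produces the claimed bound \eqref{FR-gceq4}.

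The step I expect to require the most care is the pseudofunction transfer: one must verify that multiplication by the smooth symbol $\theta/(it+\theta)$ genuinely yields the distributional boundary value of $\mathcal{L}\{T;s\}$ and that this product remains in $PF_{loc}((-\lambda,\lambda))$. Once this is in hand, everything else is routine bookkeeping, the essential idea being the introduction of the auxiliary function $\sigma$ that converts the Tauberian hypothesis \eqref{FR-gceq3} into a straight Lipschitz bound on the ``corrected'' function $T$.
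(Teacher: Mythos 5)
Your proof is correct and follows essentially the same route as the paper: the same auxiliary function $\sigma(x)=e^{-\theta x}\int_{0^-}^{x}e^{\theta u}\,\mathrm{d}\tau(u)$, the same decomposition $\tau=\sigma+\theta\int_0^x\sigma(u)\,\mathrm{d}u$, and the same identity $\mathcal{L}\{\sigma;s\}/s=\mathcal{L}\{\tau;s\}/(s+\theta)$ with the smooth multiplier $1/(s+\theta)$ transferring the local pseudofunction boundary behavior. The only cosmetic difference is that you apply Theorem \ref{finiteformF-Rth3} to the Lipschitz function $T$, whereas the paper applies its equivalent reformulation, Theorem \ref{finiteformF-Rth2}, to $\sigma$ in the role of $\rho$.
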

\begin{proof} We apply our method from the proof of \cite[Cor. 5.11]{Debruyne-VindasComplexTauberians}, but taking into account the sharp value $\pi/2$ in Theorem \ref{finiteformF-Rth2}. Define 
$$\rho(x)= e^{-\theta x}\int_{0^{-}}^{x}e^{\theta u}\mathrm{d}\tau(u).$$
Integrating by parts,
\begin{equation}
 \label{FR-gceq5}
 \tau(x)=\rho(x)+\theta\int_{0}^{x}\rho(u)\mathrm{d}u.
 \end{equation}
The relation (\ref{FR-gceq3}) is the same as $\limsup_{x\to\infty}|\rho(x)|\leq \Theta$. Thus, using Theorem \ref{finiteformF-Rth2} and \eqref{FR-gceq5}, the inequality \eqref{FR-gceq4} would follow if we verify that $\mathcal{L}\{\rho;s\}/s$ has local pseudofunction boundary behavior on $(-i\lambda,i\lambda)$. For it, notice we have that 
\begin{equation*}
\frac{\mathcal{L}\{\rho;s\}}{s}=\frac{\mathcal{L}\{\tau;s\}}{s+\theta}. 
\end{equation*}
The function $1/(s+\theta)$ is $C^{\infty}$ on $\Re e\: s=0$, and thus a multiplier for local pseudofunctions. This shows that $\mathcal{L}\{\rho;s\}/s$ has local pseudofunction boundary behavior on $(-i\lambda,i\lambda)$, as required. 
\end{proof}
\begin{remark}
The constants $\pi/2$ and $1$ in Theorem \ref{finiteformF-Rth5} are also optimal in the sense that if
\begin{equation}
\label{eqoptInghamrefined}
\limsup_{x\to\infty}\left|\tau(x)\right| \leq \left(\mathfrak{L} +\frac{\theta \mathfrak{M}}{\lambda}\right)\Theta,
\end{equation}
holds for all $\theta$ and all functions satisfying the hypotheses of the theorem, then $\mathfrak{M}\geq \pi/2$ and $\mathfrak{L} \geq 1$. To see this, take as $\tau$ again the example from Subsection \ref{F-R subsection example}.  (As usual, we normalize the situation with $\lambda=1$.) For this function, we have
$$
\Theta=\limsup_{x\to\infty}e^{-\theta x}\left|\int_{0}^{x}e^{\theta u}\tau'(u)\mathrm{d}u\right|= \frac{e^{\pi \theta} - 1}{\theta(1+e^{\pi \theta})}\: .
$$
Inserting this in \eqref{eqoptInghamrefined}, we obtain
$$\frac{\pi}{2}\leq \left(\frac{\mathfrak{L}}{\theta}+ \mathfrak{M}\right)\frac{e^{\pi \theta} - 1}{1+e^{\pi\theta}},$$
which gives $\mathfrak{M}\geq \pi/2$ after taking $\theta\to\infty$ and $\mathfrak{L} \geq 1$ after taking $\theta \to 0^{+}$.
\end{remark}

We end this section with an improved version of Theorem \ref{finiteformF-Rth2} where one allows a closed null boundary subset of possible singularities for the Laplace transform. We remark that Theorem \ref{finiteformF-Rth6} improves a theorem of Arendt and Batty from \cite{a-b} and that these kinds of Tauberian results have been extensively applied in the study of asymptotics of $C_0$-semigroups; see \cite[Chap. 4]{a-b-h-n} for an overview of results, especially when the singular set $E$ is countable and one has the stronger hypothesis of analytic continuation. Theorem \ref{finiteformF-Rth6} follows immediately by combining Theorem \ref{finiteformF-Rth2} and our characterization of local pseudofunctions \cite[Thm. 4.1]{Debruyne-VindasComplexTauberians}.

\begin{theorem}\label{finiteformF-Rth6} 
Let $\rho\in L^{1}_{loc}[0,\infty)$ have convergent Laplace transform on $\Re e\: s>0$. 
Suppose that there are $\lambda>0$, a closed null set $0\not\in E\subset\mathbb{R}$ such that  
\begin{equation}
\label{ttheq17}
\sup_{x>0}\left| \int_{0}^{x}\rho(u)e^{-itu}\mathrm{d}u\right|<M_{t}<\infty \quad \mbox{for each } t\in  E\cap(-\lambda,\lambda),
\end{equation}
and a constant $b$ such that
\begin{equation}
\label{ttheeq18}
 \frac{\mathcal{L}\{\rho; s\}-b}{s}
\end{equation}
has local pseudofunction boundary behavior on $(-i\lambda,i\lambda)\setminus iE$. Then, 
\[
\label{ttheq23} \limsup_{x\to\infty}\left|\int_{0}^{x}\rho(u)\mathrm{d}u-b\right|\leq\frac{\pi}{2\lambda}\limsup_{x\to\infty}|\rho(x)| .
\]
\end{theorem}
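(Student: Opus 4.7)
The strategy is to reduce the hypothesis to the cleaner form required by Theorem \ref{finiteformF-Rth2} via the ``removable singularity'' characterization of local pseudofunctions from \cite[Thm.~4.1]{Debruyne-VindasComplexTauberians}, and then quote Theorem \ref{finiteformF-Rth2} essentially verbatim. No new estimate at infinity needs to be made.

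Concretely, I would first set $\tau(x)=\int_{0}^{x}\rho(u)\mathrm{d}u-b$ for $x\geq 0$ and $\tau(x)=0$ otherwise. Since $\mathcal{L}\{\rho;s\}$ converges on $\Re e\:s>0$, we have
$$
\mathcal{L}\{\tau;s\}=\frac{\mathcal{L}\{\rho;s\}-b}{s},\qquad \Re e\:s>0,
$$
so assumption \eqref{ttheeq18} says precisely that $\mathcal{L}\{\tau;s\}$ admits local pseudofunction boundary behavior on the \emph{perforated} segment $(-i\lambda,i\lambda)\setminus iE$. If I can upgrade this to local pseudofunction boundary behavior on the full segment $(-i\lambda,i\lambda)$, then Theorem \ref{finiteformF-Rth2} (applied with the original $\rho$ and $b$) delivers the claimed bound at once.

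The upgrade is where \eqref{ttheq17} enters. A distributional integration by parts rewrites the hypothesis as the statement that, for each $t\in E\cap(-\lambda,\lambda)$, the truncated Fourier integrals of $\rho$ (equivalently, of the distributional derivative $\tau'$) remain bounded uniformly in $x$. This is exactly the local boundedness condition at the exceptional points that is required in \cite[Thm.~4.1]{Debruyne-VindasComplexTauberians} to fill in the closed null gap $E\cap(-\lambda,\lambda)$: combined with the $PF_{loc}$ behavior on the complement, it promotes the boundary distribution of $\mathcal{L}\{\tau;s\}$ to a local pseudofunction on all of $(-\lambda,\lambda)$.

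With the upgrade in hand, the hypotheses of Theorem \ref{finiteformF-Rth2} are satisfied and the conclusion $\limsup_{x\to\infty}|\tau(x)|\leq (\pi/2\lambda)\limsup_{x\to\infty}|\rho(x)|$ follows immediately. The only real content of the proof is matching the bookkeeping: verifying that \eqref{ttheq17} is literally the boundedness hypothesis invoked in \cite[Thm.~4.1]{Debruyne-VindasComplexTauberians}. I expect this to be the main (and essentially only) obstacle; once the translation is made, the rest is formal and no reference to the delicate extremal analysis of Section \ref{section proof of F-R theorem} has to be revisited.
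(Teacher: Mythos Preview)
Your proposal is correct and follows exactly the paper's own argument: the authors state just before the theorem that it ``follows immediately by combining Theorem \ref{finiteformF-Rth2} and our characterization of local pseudofunctions \cite[Thm.~4.1]{Debruyne-VindasComplexTauberians},'' and the subsequent remark confirms that \eqref{ttheq17} is precisely the hypothesis needed to promote \eqref{ttheeq18} to local pseudofunction boundary behavior on the whole segment $(-i\lambda,i\lambda)$. Your bookkeeping via $\tau$ and the identification of the upgrade step are spot on.
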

\begin{remark} What we have shown in \cite[Thm. 4.1]{Debruyne-VindasComplexTauberians} is that if \eqref{ttheq17} holds on the closed null exceptional set, then actually \eqref{ttheeq18} has local pseudofunction boundary behavior on the whole segment $(-i\lambda,i\lambda)$. This consideration becomes very meaningful when one works with stronger boundary conditions. For example, if (\ref{ttheeq18}) is regular at every point of $(-i\lambda,i\lambda)\setminus{iE}$ and \eqref{ttheq17} is satisfied, then $iE$ may still be a singular set for analytic continuation, though $iE$ becomes no longer singular for local pseudofunction boundary behavior.
\end{remark}

\begin{remark} It is important to notice that, in view of Theorem \ref{FRtheorem vector-valued}, all results from this section admit immediate generalizations for functions with values in Banach spaces. We leave the formulation of such vector-valued versions to the reader. For Theorem \ref{finiteformF-Rth6},    note that the proof of \cite[Thm. 4.1]{Debruyne-VindasComplexTauberians} also applies to obtain a corresponding characterization of Banach space valued local pseudofunctions. 
\end{remark}
\section{One-sided Tauberian hypotheses}\label{section one-sided F-R}

We study in this section Ingham type Tauberian theorems with one-sided Tauberian conditions. We begin with a one-sided version of Theorem \ref{finiteformF-Rth3}. 

\begin{theorem} \label{finiteformF-R1sidedth1}
Let $\tau\in L^{1}_{loc}[0,\infty)$. Suppose there are constants $M,X>0$ such that $\tau(x)+Mx$ is non-decreasing on $[X,\infty)$.
If 
\begin{equation}
\label{FR-eqLapTconv}
 \mathcal{L}\{\tau;s\} = \int^{\infty}_{0} \tau(x) e^{-sx} \mathrm{d}x \text{ converges for } \Re e \: s > 0
\end{equation}
and admits local pseudofunction boundary behavior on the segment $(-i\lambda, i\lambda)$, then
\begin{equation}
\label{FR-inequality-pi}
 \limsup_{x\rightarrow \infty} \left|\tau(x)\right| \leq \frac{M\pi}{\lambda}.
\end{equation}
The constant $\pi$ in \eqref{FR-inequality-pi} is best possible.
\end{theorem}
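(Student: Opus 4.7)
The natural plan is to convert the one-sided Tauberian hypothesis into a monotonicity condition and invoke the Graham--Vaaler sharp form of the Wiener--Ikehara theorem. First, by the standard scaling $\tilde{\tau}(x) = M^{-1}\lambda\tau(\lambda^{-1}x)$, reduce to $M = \lambda = 1$. Then modify $\tau$ on the finite interval $[0,X]$ (which preserves the pseudofunction boundary behavior of $\mathcal{L}\{\tau;s\}$, since Laplace transforms of compactly supported functions are entire) so that $\mu(x) := \tau(x) + x$ is non-decreasing on all of $[0,\infty)$.

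Since $\mathcal{L}\{\mu;s\} = \mathcal{L}\{\tau;s\} + 1/s^{2}$, the function $\mathcal{L}\{\mu;s\} - 1/s^{2} = \mathcal{L}\{\tau;s\}$ has local pseudofunction boundary behavior on $(-i,i)$. This places $\mu$ in exactly the framework of the Graham--Vaaler sharp Wiener--Ikehara theorem: for a non-decreasing $\mu$ with $\mathcal{L}\{\mu;s\} - c/s^{2}$ having local pseudofunction boundary behavior on $(-i\lambda,i\lambda)$, one obtains $\limsup_{x\to\infty}|\mu(x) - cx| \leq c\pi/\lambda$, with the sharp constant $\pi$. Here $c = 1 = \lambda$, so $\limsup_{x\to\infty}|\tau(x)| = \limsup_{x\to\infty}|\mu(x) - x| \leq \pi$, giving \eqref{FR-inequality-pi}. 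Crucially, the \emph{absence} of a simple pole $b/s$ in $\mathcal{L}\{\mu;s\} - 1/s^{2}$ at $s = 0$---which is equivalent to $\mathcal{L}\{\tau;s\}$ itself being a pseudofunction there---is what forces the bound to be on $|\mu(x)-x|$ rather than on $|\mu(x)-x-b|$ for some nonzero $b$.

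For the sharpness I would exhibit the periodic sawtooth $\tau(x) = \pi - \bigl(x - 2\pi\lfloor x/(2\pi)\rfloor\bigr)$ for $x \geq 0$, extended by $0$ for $x < 0$. Its jumps of size $2\pi$ at each $2\pi k$ combined with slope $-1$ in between make $\tau + x = \pi + 2\pi\lfloor x/(2\pi)\rfloor$ a non-decreasing step function on $[0,\infty)$; the hypothesis holds with $M = 1$. A direct computation gives
\[
\mathcal{L}\{\tau;s\} = \frac{\pi}{s} - \frac{1}{s^{2}} + \frac{2\pi\,e^{-2\pi s}}{s\bigl(1-e^{-2\pi s}\bigr)},
\]
whose third summand has Laurent expansion $1/s^{2} - \pi/s + O(1)$ near $s = 0$, so the principal part of $\mathcal{L}\{\tau;s\}$ at $s = 0$ vanishes; the remaining singularities lie at $s = \pm i k$ with $k \in \mathbb{Z}\setminus\{0\}$, none inside $(-i,i)$. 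Hence $\mathcal{L}\{\tau;s\}$ is analytic on $(-i,i)$, and $\limsup_{x\to\infty}|\tau(x)| = \pi$ saturates the inequality.

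The principal obstacle is invoking the Graham--Vaaler result in precisely the local pseudofunction formulation required. Should an adaptation be needed, an equivalent route is to smooth $\tau$ by short averages $\tau_{\delta}(x) = \delta^{-1}\int_{x}^{x+\delta}\tau(u)\,\mathrm{d}u$, which is absolutely continuous with $\tau_{\delta}' \geq -1$ a.e., and whose Laplace transform still has local pseudofunction boundary behavior on $(-i,i)$ (the multiplier $(e^{\delta s}-1)/(\delta s)$ being entire with value $1$ at $s = 0$, and the endpoint contributions being entire). Applying Theorem \ref{finiteformF-R1sideth2} to $\rho = \tau_{\delta}'$ with $b = -\tau_{\delta}(0)$ yields $\limsup|\tau_{\delta}| \leq \pi$, and the one-sided comparisons $\tau(x) \leq \tau_{\delta}(x) + \delta/2$ and $\tau(x) \geq \delta^{-1}\int_{x-\delta}^{x}\tau(u)\,\mathrm{d}u - \delta/2$---both immediate from $\tau + x$ being non-decreasing---give $\limsup|\tau(x)| \leq \pi + \delta/2$; letting $\delta \to 0^{+}$ concludes the proof.
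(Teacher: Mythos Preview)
Your main approach has a genuine gap: the result you attribute to Graham--Vaaler is not their theorem. The Graham--Vaaler sharp Wiener--Ikehara theorem concerns a non-decreasing $S$ whose Laplace--Stieltjes transform $\mathcal{L}\{\mathrm{d}S;s\}$ has a \emph{simple} pole at a point $s=\theta$ with $\theta>0$, and it gives sharp multiplicative bounds on $e^{-\theta x}S(x)$; as $\theta\to 0^{+}$ those bounds collapse to the same value and only the qualitative convergence $S(x)\to A$ survives. The statement you invoke---that for non-decreasing $\mu$ with $\mathcal{L}\{\mu;s\}-c/s^{2}$ a local pseudofunction on $(-i\lambda,i\lambda)$ one has $\limsup|\mu(x)-cx|\le c\pi/\lambda$---is, after writing $\tau=\mu-cx$, precisely Theorem~\ref{finiteformF-R1sidedth1} itself. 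So the argument is circular.

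Your fallback via smoothing and Theorem~\ref{finiteformF-R1sideth2} is also circular within the paper's logic: Theorem~\ref{finiteformF-R1sideth2} is not proved independently but is obtained \emph{as an immediate corollary of} Theorem~\ref{finiteformF-R1sidedth1}. What your smoothing argument actually establishes is the (easy) equivalence of the two statements, not a proof of either. The paper avoids both circularities by introducing an auxiliary parameter $\theta>0$: it applies the genuine Graham--Vaaler theorem to the exponentially weighted function $S(x)=\int_{0^{-}}^{x}e^{\theta u}(\mathrm{d}\tau(u)+M\,\mathrm{d}u)$, feeds the resulting bound on $\Theta=\limsup e^{-\theta x}|\int_{0^{-}}^{x}e^{\theta u}\mathrm{d}\tau(u)|$ into Theorem~\ref{finiteformF-Rth5} (which rests on the sharp two-sided constant $\pi/2$), and then lets $\theta\to 0^{+}$. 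The sharp constant $\pi$ emerges from the interplay of these two ingredients; neither alone supplies it.

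Your sharpness example is correct and is, up to rescaling, the paper's Example~\ref{FR-ex1}.
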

\begin{proof} We combine Ingham's idea from \cite[pp. 472--473]{ingham1935} with the Graham-Vaaler sharp version of the Wiener-Ikehara theorem \cite{grahamvaaler} and Theorem \ref{finiteformF-Rth5}. Let $\theta>0$. We may assume that $X=0$. Also notice that since $\tau(x) + Mx$ is non-decreasing, $\tau$ is of local bounded variation. Our Tauberian assumption on $\tau$ is that $\mathrm{d}\tau(x)+M\mathrm{d}x$ is a positive measure on $[0,\infty)$. Consider the non-decreasing function 
$S(x)=\int_{0^{-}}^{x}e^{u\theta}(\mathrm{d}\tau(u)+M\mathrm{d}u)$.
Its Laplace-Stieltjes transform is
$$
\mathcal{L}\{\mathrm{d}S;s\}= (s-\theta)\mathcal{L}\{\tau;s-\theta\}+\frac{M}{s-\theta}, \quad \Re e\:s>\theta,
$$
and hence 
$$
\mathcal{L}\{\mathrm{d}S;s\}-\frac{M}{s-\theta}
$$
has local pseudofunction pseudofunction boundary behavior on $(\theta-i\lambda,\theta + i\lambda)$. The sharp Wiener-Ikehara theorem\footnote{Graham and Vaaler state their theorem with the boundary hypothesis of continuous extension \cite[Thm. 10, p. 294]{grahamvaaler}, while Korevaar works with $L^{1}_{loc}$-boundary behavior \cite[Thm. III.5.4, p. 130]{korevaarbook}. Since pseudomeasure boundary behavior of $\mathcal{L}\{\mathrm{d}S;s\}$ at $s=\theta$ yields \cite[Prop. 3.1]{Debruyne-VindasW-I}  $e^{-\theta x}S(x)=O(1)$, a small adaptation via a density argument in the proof given in \cite[p. 130--131]{korevaarbook} shows that the Graham-Vaaler theorem remains valid under the more general hypothesis of local pseudofunction boundary behavior.} yields
$$
\frac{2\pi\theta/\lambda}{e^{2\pi\theta/\lambda}-1} \: \frac{M}{\theta}
 \leq \liminf_{x\to\infty}e^{-\theta x}S(x)\leq \limsup_{x\to\infty}e^{-\theta x}S(x)\leq \frac{2\pi\theta/\lambda}{1-e^{-2\pi\theta/\lambda}} \: \frac{M}{\theta}.
$$
Consequently,
\begin{align*}
&\limsup_{x\to\infty}e^{-\theta x}\left|\int_{0^{-}}^{x}e^{u\theta}\mathrm{d}\tau(u)\right|
\\
&
\quad \quad \quad \quad
\leq \frac{M}{\theta (e^{2\pi\theta/\lambda}-1)} \max\{e^{2\pi\theta/\lambda}-1-2\pi\theta/\lambda, e^{2\pi\theta/\lambda}2\pi\theta/\lambda- e^{2\pi\theta/\lambda}+1  \}
\\
&
\quad \quad\quad \quad
=
\frac{M(e^{2\pi\theta/\lambda}2\pi\theta/\lambda- e^{2\pi\theta/\lambda}+1)}{\theta(e^{2\pi\theta/\lambda}-1)} .
\end{align*}
Applying Theorem \ref{finiteformF-Rth5} and setting $u=2\pi \theta/\lambda$, we obtain
$$
\limsup_{x\to\infty}|\tau(x)|\leq \frac{\pi M}{\lambda} \left(1+\frac{u}{4}\right)\frac{2(ue^{u}-e^u+1)}{u(e^{u}-1)}.
$$
This inequality is valid for all $u>0$. Taking the limit as $u\to0^{+}$, we obtain \eqref{FR-inequality-pi}. The optimality of the constant $\pi$ is shown in Example \ref{FR-ex1} below.
\end{proof}

Theorem \ref{finiteformF-R1sideth2} is an immediate corollary of Theorem \ref{finiteformF-R1sidedth1} (except for the sharpness of $\pi$ there that is checked below). The next generalization of Theorem \ref{finiteformF-R1sidedth1} can be shown via the simple reduction used in the Theorem \ref{finiteformF-Rth4}. Define the decrease modulus (at infinity) of a function $\tau$ as the non-decreasing subadditive function
\[
\Psi_{-}(\delta):=\Psi_{-}(\tau,\delta)= -\liminf_{x\to\infty}\inf_{h\in [0,\delta]}\tau(x+h)-\tau(x), \quad \delta>0.
\]
Notice $\Psi_{-}$ is non-negative. A function $\tau$ is said to be boundedly decreasing if $\Psi_{-}(\delta)$ is finite for some (and hence all) $\delta>0$. It is slowly decreasing if $\Psi_{-}(0^{+})=0$, and we call $\tau$  R-slowly decreasing (regularly slowly decreasing) if 
$$
\Psi'_{-}(0^{+})=\sup_{\delta>0}\frac{\Psi_{-}(\delta)}{\delta}<\infty.
$$

\begin{theorem}
\label{finiteformF-R1sidedth3} Let $\tau\in L^{1}_{loc}[0,\infty)$ be such that \eqref{FR-eqLapTconv} holds. If $\mathcal{L}\{\tau;s\}$
admits local pseudofunction boundary behavior on the segment $(-i\lambda, i\lambda)$, then
\begin{equation}
\label{FR1sidedeq1}
\limsup_{x\to\infty}|\tau(x)|\leq \inf_{\delta>0} \left(1+\ \frac{\pi}{\delta\lambda} \right) \Psi_{-}(\delta).
\end{equation}
Furthermore, if $\tau$ is R-slowly decreasing, then
\begin{equation}
\label{FR1sidedeq2}
\limsup_{x\to\infty}|\tau(x)|\leq \frac{\pi \Psi'_{-}(0^{+})}{\lambda}.
\end{equation}
\end{theorem}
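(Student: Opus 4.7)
The plan is to carry out the one-sided analogue of the proof of Theorem \ref{finiteformF-Rth4}, replacing the appeal to Theorem \ref{finiteformF-Rth3} there by an appeal to Theorem \ref{finiteformF-R1sidedth1}. I will assume that $\tau$ is boundedly decreasing, otherwise the right-hand side of \eqref{FR1sidedeq1} is trivially infinite. Fix $\delta>0$ and choose any $M>\Psi_{-}(\delta)$; then there is $X>0$ such that $\tau(x+h)-\tau(x)\geq -M$ for all $x\geq X$ and all $h\in[0,\delta]$. Define the smoothed function $\tau_{\delta}$ as in \eqref{FRtaudelta}. A direct calculation shows
\[
\tau_{\delta}(x+h)-\tau_{\delta}(x)=\frac{1}{\delta}\int_{x}^{x+h}\bigl(\tau(u+\delta)-\tau(u)\bigr)\mathrm{d}u\geq -\frac{M}{\delta}h
\]
for $x\geq X$ and $h\geq 0$, so that $\tau_{\delta}(x)+(M/\delta)x$ is non-decreasing on $[X,\infty)$. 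The same computation as in the proof of Theorem \ref{finiteformF-Rth4} gives $\mathcal{L}\{\tau_{\delta};s\}=((e^{\delta s}-1)/(\delta s))\mathcal{L}\{\tau;s\}$ modulo an entire function, and the multiplier is entire on the imaginary axis, so $\mathcal{L}\{\tau_\delta;s\}$ inherits local pseudofunction boundary behavior on $(-i\lambda,i\lambda)$. Applying Theorem \ref{finiteformF-R1sidedth1} to $\tau_\delta$ will then yield $\limsup_{x\to\infty}|\tau_{\delta}(x)|\leq M\pi/(\delta\lambda)$.

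The main step that is genuinely different from the two-sided case treated in Theorem \ref{finiteformF-Rth4} is the transition back from $\tau_\delta$ to $\tau$: the one-sided decrease condition no longer allows a direct estimate of $|\tau(x)-\tau_{\delta}(x)|$ by $M$. My trick will be to use both the forward average $\tau_{\delta}$ and the backward average $\tilde\tau_{\delta}(x):=\frac{1}{\delta}\int_{x-\delta}^{x}\tau(u)\mathrm{d}u=\tau_{\delta}(x-\delta)$. For $u\in[x,x+\delta]$ and $x$ large, the one-sided hypothesis gives $\tau(x)-\tau(u)\leq M$, whence $\tau(x)\leq \tau_{\delta}(x)+M$ after averaging. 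For $u\in[x-\delta,x]$, the same hypothesis applied with base point $u$ and increment $x-u\in[0,\delta]$ gives $\tau(x)-\tau(u)\geq -M$, that is $\tau(u)-\tau(x)\leq M$; averaging over $u\in[x-\delta,x]$ yields $\tau(x)\geq \tilde\tau_{\delta}(x)-M$. Combining,
\[
|\tau(x)|\leq \max\bigl(|\tau_{\delta}(x)|,|\tilde\tau_{\delta}(x)|\bigr)+M
\]
for all sufficiently large $x$. Since $\tilde\tau_{\delta}$ is merely a shift of $\tau_{\delta}$, both limsups on the right are bounded by $M\pi/(\delta\lambda)$. Letting $M\downarrow\Psi_{-}(\delta)$ and then taking the infimum over $\delta>0$ will produce \eqref{FR1sidedeq1}.

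Finally, \eqref{FR1sidedeq2} follows from \eqref{FR1sidedeq1} by letting $\delta\to 0^+$: the R-slowly decreasing hypothesis forces $\Psi_{-}(\delta)\leq \Psi'_{-}(0^+)\delta\to 0$ while $\Psi_{-}(\delta)/\delta\to \Psi'_{-}(0^+)$, so
\[
\inf_{\delta>0}\Bigl(1+\frac{\pi}{\delta\lambda}\Bigr)\Psi_{-}(\delta)\leq \lim_{\delta\to 0^+}\Bigl(\Psi_{-}(\delta)+\frac{\pi\,\Psi_{-}(\delta)}{\delta\lambda}\Bigr)=\frac{\pi\,\Psi'_{-}(0^+)}{\lambda},
\]
as required.
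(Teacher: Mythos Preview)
Your proof is correct and follows essentially the same approach as the paper's. The paper's proof is slightly more terse: after applying Theorem~\ref{finiteformF-R1sidedth1} to $\tau_\delta$, it simply records the two one-sided bounds $\tau(x)\leq \tau_\delta(x)+M$ and $\tau_\delta(x)-M\leq \tau(x+\delta)$, which is precisely your forward/backward average trick written without introducing the separate notation $\tilde\tau_\delta$.
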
 
\begin{proof} Fix $\delta$ and let $M>\Psi_{-}(\delta)$. The function
(\ref{FRtaudelta}) satisfies that $\tau_{\delta}(x)+Mx/\delta$ is non-decreasing for all $x\geq X$, when $X>0$ is sufficiently large. Applying Theorem \ref{finiteformF-R1sidedth1} to it, we get
$$
\limsup_{x\to\infty} \frac{1}{\delta}\left|\int_{x}^{x+\delta} \tau(u)\mathrm{d}u\right|\leq \frac{M\pi}{\delta\lambda}.
$$
Now, 
$ \tau(x)\leq \tau_{\delta}(x)+M$ and $\tau_{\delta}(x)-M\leq \tau(x + \delta)$ for $x\geq X$, whence we obtain \eqref{FR1sidedeq1}. The inequality \eqref{FR1sidedeq2} follows from \eqref{FR1sidedeq1} if $\tau$ is additionally R-slowly decreasing.
\end{proof}
Let us now  give two examples for the optimality of  Theorem \ref{finiteformF-R1sidedth1} and Theorem \ref{finiteformF-R1sideth2}.

\begin{example} \label{FR-ex1} Let
\begin{equation}\label{FR1sidedeq3}
 \tau(x) = \begin{cases} 
0 & \mbox{if }  x \leq 0, \\
-x & \mbox{if } 0 \leq x \leq 1,\\
-x + N & \mbox{if } N-1 < x \leq N+1 \mbox{ for even } N.
\end{cases}
\end{equation}
Calculating its Laplace transform, one gets
\begin{equation}
\label{FR1sidedeq4}
 \mathcal{L}\{\tau;s\} = -\frac{1}{s^2} + \frac{2e^{-s}}{s(1-e^{-2s})}, \ \ \ \Re e \: s > 0,
\end{equation}
which admits analytic extension to $(-i\pi , i\pi )$, and thus also has local pseudofunction boundary behavior on that boundary segment. Since $M = 1$, $\lambda = \pi$, and $\limsup_{x\to\infty} \left|\tau(x)\right| = 1$, the inequality (\ref{FR-inequality-pi}) cannot hold with a better value than $\pi$. An appropriate transformation of this example will then show the sharpness for arbitrary $M$ and $\lambda$. 
\end{example}

\begin{example} \label{FR-ex2} To show the sharpness of \eqref{F-Rintroeq3} in Theorem \ref{finiteformF-R1sideth2}, it suffices to construct a sequence of bounded functions such that $\operatorname*{supp}\rho_{n}\subseteq[0,\infty)$, $\liminf_{x\to\infty}\rho_n(x)=-1$, their Laplace transforms $\mathcal{L}\{\rho_n;s\}$ have analytic extension to $(-i\pi,i\pi)$ with $\mathcal{L}\{\rho_n;0\}=0$, and 
\begin{equation}
\label{FR1sidedeq5}
\lim_{n\to\infty} \limsup_{x\to\infty}\left|\int_{0}^{x}\rho_{n}(u)\mathrm{d}u\right|=1.
\end{equation}
We consider smooth versions of the (distributional) derivative of $\tau$ given by \eqref{FR1sidedeq3}. Let $\psi\in\mathcal{S}(\mathbb{R})$ be a non-negative test function such that $\operatorname*{supp}\psi\subseteq(1,3)$ and $\int_{-\infty}^{\infty}\psi(x)\mathrm{d}x=1$. Set $\psi_{n}(x)=n\psi(nt)$ and $$\rho_{n}(x)=(\psi_{n} \ast \mathrm{d}\tau)(x)=-\int_{0}^{x}\psi_n(u)\mathrm{d}u+2\sum_{k=0}^{\infty}\psi_{n}(x-2k-1).$$
The smooth functions $\rho_n$ are all supported in $[0,\infty)$ and clearly 
$$\liminf_{x\to\infty}\rho_n(x)=\min_{x\in\mathbb{R}}\rho_n(x)=-1.$$
Furthermore, using (\ref{FR1sidedeq4}), their Laplace transforms $\mathcal{L}\{\rho;s\}$ extend to $(-i\pi , i\pi )$ analytically as
$$
\mathcal{L}\{\rho_n;it\}=\hat{\psi}(t/n)\left( -\frac{1}{it} + \frac{2e^{-it}}{1-e^{-2it}}\right), \quad t\in (-\pi,\pi),
$$  
and $\mathcal{L}\{\rho_n;0\}=\hat{\psi}(0)\cdot 0=0$.
Also, 
$$
\int_{0}^{x}\rho_n(u)\mathrm{d}u=(\psi_{n}\ast \tau)(x).
$$
Since $\tau$ is uniformly continuous on any closed set $\mathbb{R}\setminus(\bigcup_{n\in\mathbb{N}} (2n+1-\varepsilon,2n+1+\varepsilon)  )$, we have that $\psi_n\ast \tau$ converges uniformly to $\tau$ on any closed set $\mathbb{R}\setminus(\bigcup_{n\in\mathbb{N}} (2n+1-\varepsilon,2n+1+\varepsilon)  )$. Therefore, (\ref{FR1sidedeq5}) holds. 
\end{example}
\begin{remark}
We can also use our convolution method from \cite{Debruyne-VindasComplexTauberians} to get a value for the constant in Theorem \ref{finiteformF-R1sidedth1}. Although the optimal constant $\pi$ seems then to be out of reach, that simple method delivers a much better constant than Ingham's (cf. \eqref{F-Rintroeq4}). For example, we discuss here how to obtain the weak inequality
$$
\limsup_{x\to\infty}|\tau(x)|\leq \frac{4.1\: M}{\lambda}
$$
under the assumptions of Theorem \ref{finiteformF-R1sidedth1}. As was the case for the two-sided Tauberian condition, we may suppose that $M = \lambda = 1$ by an appropriate transformation. The Tauberian condition implies that $\tau$ is boundedly decreasing (in the sense defined in \cite[Sect. 3]{Debruyne-VindasComplexTauberians}). Hence, we deduce from \cite[Thm.  3.1]{Debruyne-VindasComplexTauberians} that $\tau$ is bounded near $\infty$. We may then suppose without loss of generality that $\tau\in L^{\infty}(\mathbb{R})$. We may also assume that the Tauberian condition holds globally, that is,
\begin{equation}
\label{FR-Taubcond1sided1}
\tau(x+h)-\tau(x)\geq -h, \quad \mbox{for all }x\in\mathbb{R} \mbox{ and } h\geq0.
\end{equation}
 We let $S := \limsup_{x\to\infty} \left|\tau(x)\right|$. As in the proof of Theorem \ref{finiteformF-Rth3} (see Subsection \ref{subsection F-Rproof}),  the local pseudofunction boundary behavior of the Laplace transform translates into
\[
\int^{\infty}_{-\infty} \tau(x+h)\phi(x)\mathrm{d}x = o(1),
\]
for all $\phi \in L^{1}(\mathbb{R})$ whose Fourier transform vanishes outside the interval $[-1.1]$. We pick the F\'{e}jer kernel
\[
 \phi(x) = \left(\frac{\sin{(x/2)}}{x/2}\right)^{2}.
\]
Suppose that $S > 4.1$. Let $\varepsilon > 0$ be a sufficiently small constant; more precisely, we choose it such that
$$
9.79 \approx 2\int^{5.85}_{-2.35}\phi(x)\mathrm{d}x > (1+\varepsilon/(S-4.1)) \int^{\infty}_{-\infty}\phi(x)\mathrm{d}x = 2\pi (1 + \varepsilon/(S-4.1))
$$
and
\[
25.77\approx \int^{5.85}_{-2.35} (8.2 - (x+2.35))\phi(x) \mathrm{d}x > 4.1 \int^{\infty}_{-\infty}\phi(x)\mathrm{d}x  + \varepsilon \approx 25.76 + \varepsilon.
\]
Then there exists $Y$ such that $\int^{\infty}_{-\infty} \tau(x+Y+2.35)\phi(x)\mathrm{d}x \leq \varepsilon$ and $\tau(Y) \geq S - \varepsilon$. (The case $\tau(Y) \leq -4.1 + \varepsilon$ can be treated similarly.) We may additionally assume that $\tau(x) \geq -S -\varepsilon$ for all $x$. (Here we note that the $\varepsilon$ that gives the contradiction does not depend on $\tau$, but only on $S$ and some other absolute constants.) Since $\phi$ is nonnegative and $\tau$ satisfies \eqref{FR-Taubcond1sided1}, it follows that
\begin{align*}
\int^{\infty}_{-\infty} \tau(x+Y+2.35)\phi(x) \mathrm{d}x & \geq \int^{-2.35}_{-\infty} (-S-\varepsilon)\phi(x) \mathrm{d}x + \int^{5.85}_{-2.35}(S-\varepsilon -x - 2.35)\phi(x) \mathrm{d}x \\
& \quad + \int^{\infty}_{5.85}(-S-\varepsilon)\phi(x)\mathrm{d}x \\
& \geq \int^{-2.35}_{-\infty} -4.1\phi(x) \mathrm{d}x + \int^{5.85}_{-2.35}(4.1-x - 2.35)\phi(x) \mathrm{d}x \\
& \quad + \int^{\infty}_{5.85}-4.1\phi(x)\mathrm{d}x \\
& = \int^{5.85}_{-2.35} (8.2- x - 2.35)\phi(x) \mathrm{d}x - \int^{\infty}_{-\infty} 4.1 \phi(x) \mathrm{d}x > \varepsilon,
\end{align*}
establishing a contradiction. Therefore, we must have $S \leq 4.1$.
\end{remark}

\end{document}